\newtheorem{theo} {Theorem} [section]
\newtheorem{lem} [theo]{Lemma}
\newtheorem{prop}[theo]{Proposition}
\newtheorem{cor} [theo]{Corollary}
\title*{Cycle Connectivity and Automorphism Groups of Flag Domains}
\author{Alan Huckleberry \\ \vspace{0.5cm} \textit{Dedicated to Arcady Onishchik on the occasion of his 80${}^{th}$ birthday.}}
\institute{Alan Huckleberry \at Institut f\"ur Mathematik Ruhr Universit\"at \\ Bochum Universit\"atsstrasse 150 \\ 44780 Bochum, Germany \\ and \\ Jacobs University Bremen, School of Engineering and Science\\ Campus Ring 1, 28759, Bremen, Germany\\ \email{ahuck@gmx.de}} 
\begin{document}

\maketitle

\vspace{1.5cm}

\abstract{\\A flag domain $D$ is an open orbit of a real form $G_0$ in a flag manifold 
$Z=G/P$ of its complexification. If $D$ is holomorphically convex, then, since
it is a product of a Hermitian symmetric space
of bounded type and a compact flag manifold, $\mathrm{Aut}(D)$ is easily described.
If $D$ is not holomorphically convex, then in previous work it was shown that
$\mathrm{Aut}(D)$ is a Lie group whose connected component at the identity 
agrees with $G_0$ except possibly in situations 
which arise in Onishchik's list of flag manifolds where $\mathrm{Aut}(Z)^0=\widehat{G}$ 
is larger than $G$.  In the present work the group $\mathrm{Aut}(D)^0=\widehat{G}_0$ 
is described as a real form of $\widehat{G}$.  Using an observation of Kollar,
new and much simpler proofs of much of our previous work in the case where 
$D$ is not holomorphically convex are given. \vspace{0.3cm} \\ \textbf{Mathematics Subject Classification (2010).} 14M15, 32M05, 57S20 \vspace{0.3cm} \\ \textbf{Key words:} Flag domains, Automorphism groups, Finiteness Theorem}

\section{Introduction and statement of results}\label{intro}
\noindent
Recall that if $Z$ is a compact complex manifold, then its Lie algebra
$\mathfrak {g}=\mathrm{Vect}_{\mathcal {O}}(Z)$ of holomorphic vector
fields is finite-dimensional and that the fields in $\mathfrak {g}$ can be 
integrated to define a holomorphic action of the associated simply-connected complex
Lie group $G$.  If $Z$ is homogeneous in the sense that this group acts
transitively,  then we choose a base point $z_0\in Z$, let $H=G_{z_0}$ denote
the isotropy group at that point and identify $Z$ with the quotient $G/H$.  If
$G$ is projective algebraic with trivial Albanese, i.e., with $b_1(Z)=0$, then
$G$ is semisimple, the isotropy group $H$ is a so-called parabolic subgroup,
which from now on we denote by $P$, and $Z=G/P$ is a $G$-orbit in the
projective space $\mathbb {P}(V)$ of an appropriate  $G$-representation space $V$.  In
this case we refer to $Z$ as a flag manifold.

\bigskip\noindent
A real form $G_0$ of $G$ is a real Lie subgroup of $G$ such
that the complexification $\mathfrak{g}_0+i\mathfrak{g}_0$ is the Lie algebra
$\mathfrak{g}$.  If $Z=G/P$ is a flag manifold, then any real form $G_0$ of $G$
has only finitely many orbits in $Z$ (\cite{W}, see also \cite{FHW} for this as well
as other background.).  In particular, $G_0$ always has at least one
open orbit $D$.  We refer to such an open orbit as a flag domain.  If $G_0$ is not simple, then,
$D$ has product structure corresponding to the factors of $G_0$.  Thus, for our
considerations here there is no loss of generality in assuming that $G_0$ is simple
which we do throughout.  Note that if $G_0$ has the abstract structure of a complex
Lie group, then its complexification $G$ is, however, not simple.  Note also that
$G_0$ could act transitively on $Z$, e.g., this is always the case for a compact
real form.  However, from the point of view of this article, in that case all phenomena
are well understood and therefore we assume that $D$ is a proper subset of $Z$.

\bigskip\noindent
Since by assumption a flag domain $D$ is noncompact, there is no a priori reason
to expect that $\mathrm{Aut}(D)$ or $\mathrm{Vect}_{\mathcal {O}}(D)$ is finite-dimensional.
In fact if $D$ possesses non-constant holomorphic functions, the latter  is not the case
and the former is often not the case as well.  Let us begin here by reviewing this situation.
 
\bigskip\noindent
If $X$ is any complex manifold, then the equivalence relation,
$$
x\sim y \ \Leftrightarrow \ f(x)=f(y) \ \text{for all} \ f\in \mathcal{O}(X)\,,
$$
is equivariant with respect to the full group $\mathrm{Aut}(X)$ of holomorphic automorphisms.
If $X=G/H$ is homogeneous with respect to a Lie group
of holomorphic transformations, then the reduction $X\to X/\sim $ by this
equivalence relation is a $G$-equivariant holomorphic homogeneous fibration
$G/H\to G/I$.  If $D=G_0/H_0$ is a flag domain, then this reduction has a particularly 
simple form (\cite{W}, \cite{FHW},$\S4.4$ ).  For this let $D=G_0.z_0$ with
$H_0$ (resp $P$) be the $G_0$-isotropy subgroup (resp. $G$-isotropy subgroup)
at $z_0$.
\begin {theo}
If $D=G_0.z_0$ is a flag domain with $\mathcal{O}(D)\not=\mathbb {C}$, then
the holomorphic reduction $D=G_0/H_0\to G_0/I_0=\widetilde {D}$ is the restriction
of a fibration $Z=G/P\to G/\widetilde{P}=\widetilde{Z}$ of the ambient flag manifold with the
properties
\begin {enumerate}
\item
The fiber of $Z\to \widetilde{Z}$, which is itself is a flag manifold, agrees with
the fiber of $D\to \widetilde {D}$.
\item
The base $\widetilde{D}$ is a $G_0$-flag domain in $\widetilde{Z}$. It is
a Hermitian symmetric space of noncompact type embedded in a
canonical way in its compact dual $\widetilde {Z}$.
\end {enumerate}
\end {theo}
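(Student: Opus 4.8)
The plan is to produce a parabolic subalgebra $\widetilde{\mathfrak p}\supseteq\mathfrak p$ whose associated fibration $Z=G/P\to G/\widetilde P=\widetilde Z$ restricts on $D$ to the holomorphic reduction. Write $\mathfrak h_0=\mathfrak g_0\cap\mathfrak p$ for the isotropy at $z_0$, recall the openness condition $\mathfrak g_0+\mathfrak p=\mathfrak g$, and let $G_0/H_0\to G_0/I_0=\widetilde D$ be the holomorphic reduction, so that $\mathfrak i_0\supseteq\mathfrak h_0$ is the Lie algebra of the stabilizer $I_0$ of the connected fibre $F=I_0.z_0$ through $z_0$. Since $F$ is a complex submanifold of $D$, its tangent space $T_{z_0}F=\mathfrak i_0/\mathfrak h_0$ is a complex (i.e.\ $J$-invariant) subspace of $T_{z_0}Z=\mathfrak g/\mathfrak p$; consequently the subspace $\widetilde{\mathfrak p}:=\mathfrak p+\mathfrak i_0$ is stable under multiplication by $i$ and hence equals the complex span $\mathfrak p+\mathfrak i_0^{\mathbb C}$. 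The decisive reduction is this: \emph{once $\widetilde{\mathfrak p}$ is known to be a Lie subalgebra}, it is automatically parabolic, because any subalgebra of $\mathfrak g$ containing the parabolic $\mathfrak p$ is parabolic, and then the standard theory furnishes the $G$-homogeneous fibration $Z\to\widetilde Z$ with fibre the flag manifold $\widetilde P/P$.

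Hence everything rests on showing that $\widetilde{\mathfrak p}=\mathfrak p+\mathfrak i_0^{\mathbb C}$ is closed under the bracket. As $\mathfrak p$ and $\mathfrak i_0^{\mathbb C}$ are themselves subalgebras, this amounts to the inclusion $[\mathfrak p,\mathfrak i_0]\subseteq\widetilde{\mathfrak p}$, which says that the holomorphic distribution $\mathcal D$ tangent to the fibres of $D\to\widetilde D$ is invariant not merely under $\mathfrak g_0$ (clear, the fibration being $G_0$-equivariant) but under all of $\mathfrak p$, and therefore, since $\mathfrak g=\mathfrak g_0+\mathfrak p$, under all of $\mathfrak g$. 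This is precisely the step where $G_0$-invariance of the holomorphic foliation must be upgraded to $G$-invariance, and I expect it to be the main obstacle. The input I would use is the compactness of the fibres of the holomorphic reduction of a flag domain (\cite{W}, \cite{FHW}, \S4.4): a compact connected complex submanifold $F\subseteq Z$ on which $\mathcal O(D)$ is constant is rigid, and the $G_0$-orbit of $F$ in the relevant cycle space extends to a $G$-orbit, forcing $\mathcal D$ to be $G$-invariant. Granting this, $\widetilde P/P$ is a compact flag manifold sitting inside $D$; as $\mathcal O(D)$ is constant on it and $I_0$ acts transitively, it coincides with the reduction fibre $F$, which proves (1).

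For (2), note first that $\widetilde D=G_0.\widetilde z_0$ is an open $G_0$-orbit in $\widetilde Z$ (since $\mathfrak g_0+\widetilde{\mathfrak p}\supseteq\mathfrak g_0+\mathfrak p=\mathfrak g$), and that by the universal property of the reduction $\mathcal O(\widetilde D)$ separates points of $\widetilde D$. The base cycle $C_0=K_0.\widetilde z_0$, where $K_0$ is a maximal compact subgroup of $G_0$, is a compact connected complex submanifold of $\widetilde D$; holomorphic functions are constant on it, so separation forces $C_0$ to be the single point $\widetilde z_0$. Thus $K_0\subseteq I_0$, whence $\mathfrak k_0\subseteq\mathfrak i_0$ and, $\widetilde{\mathfrak p}$ being complex, $\mathfrak k=\mathfrak k_0^{\mathbb C}\subseteq\widetilde{\mathfrak p}$. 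Writing $\mathfrak g=\mathfrak k\oplus\mathfrak m$ for the complexified Cartan decomposition, $\widetilde{\mathfrak p}=\mathfrak k\oplus(\widetilde{\mathfrak p}\cap\mathfrak m)$ with $\widetilde{\mathfrak p}\cap\mathfrak m$ a proper $\mathfrak k$-submodule of $\mathfrak m$ (proper since $\mathcal O(D)\neq\mathbb C$ makes $\widetilde{\mathfrak p}\neq\mathfrak g$). A proper parabolic containing $\mathfrak k$ can exist only when $\mathfrak g_0$ is of Hermitian type, in which case $\mathfrak m=\mathfrak m^+\oplus\mathfrak m^-$ is the sum of the two abelian $\mathfrak k$-irreducible pieces and $\widetilde{\mathfrak p}=\mathfrak k\oplus\mathfrak m^{\mp}$. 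Then $\widetilde Z=G/\widetilde P$ is the compact Hermitian symmetric space dual to $G_0/K_0$, and $\widetilde D=G_0/K_0$ is its Borel embedding, giving (2). The genuinely hard point remains the passage from $G_0$- to $G$-invariance in the first part; the Hermitian conclusion in (2), by contrast, follows cleanly once the base cycle is collapsed.
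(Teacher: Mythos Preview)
The paper does not supply its own proof of this theorem; it is quoted from \cite{W} and \cite{FHW}, \S4.4, and then used as input for the rest of the article. So there is no in-paper argument to compare against directly, and your sketch is in effect being measured against the classical treatment.

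Your overall architecture is the standard one, and your identification of the crux---promoting the $G_0$-invariant holomorphic foliation to a $G$-invariant one, equivalently showing that $\widetilde{\mathfrak p}=\mathfrak p+\mathfrak i_0$ is a subalgebra---is exactly right. The gap you flag is real: the ``rigidity/cycle space'' remark you make is a gesture, not a proof, and as written part~(1) is incomplete while part~(2) depends on it (you invoke ``$\widetilde{\mathfrak p}$ parabolic'' to force Hermitian type).

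The cleanest repair is to reverse the order. Your base-cycle argument in part~(2) does \emph{not} actually need $\widetilde Z$: the image $K_0.\widetilde z_0$ of $C_0$ in $\widetilde D=G_0/I_0$ is compact and connected, so holomorphic functions are constant on it, and separation collapses it to a point; hence $K_0\subseteq I_0$. Now use the maximality of $K_0$ as an abstract subgroup of $G_0$ (this is Brun's theorem, stated as Theorem~2.2 in the paper) to conclude $I_0=K_0$, since $I_0=G_0$ would make the reduction trivial. Thus $\widetilde D=G_0/K_0$ carries an invariant complex structure, so $\mathfrak g_0$ is of Hermitian type and $\widetilde D$ is the bounded symmetric domain; this is (2), obtained without (1).

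With (2) in hand, (1) follows: the fiber through $z_0$ is $C_0=K.z_0$, and for $p\in P$ near the identity $p(C_0)$ is a $G$-translate of $C_0$ contained in $D$, hence---because $\widetilde D$ is Stein---contained in a single fiber, namely the one through $z_0$, so $p(C_0)=C_0$. Therefore $P$ normalizes $C_0$, the stabilizer $\widetilde P:=\mathrm{Stab}_G(C_0)$ contains both $K$ and $P$ and is parabolic, and a dimension count (using $\mathfrak g_0\cap\widetilde{\mathfrak p}=\mathfrak k_0$) gives $\widetilde P/P=C_0$. This is precisely the mechanism the paper itself uses later, in the proof of Proposition~2.1, to pass from the $G_0$-equivariant cycle reduction to the $G$-equivariant fibration $G/P\to G/\widetilde P$ with $\widetilde P=KP$; you may simply borrow that step.
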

\noindent
Recall that a symmetric space of noncompact type of a simple Lie group is a topological cell and
that in the Hermitian case it is a Stein manifold.  Thus  Grauert's Oka-principle
implies that the fibration $D\to \widetilde {D}$ is a (holomorphically) trivial bundle.  
As a consequence we have the following more refined version of the above result.
\begin {cor}
A flag domain $D$ with $\mathcal{O}(D)\not=\mathbb {C}$ is the product 
$\widetilde{D}\times F$  of a Hermitian symmetric space $\widetilde{D}$ 
of noncompact type and a compact flag manifold $F$.  In particular, $D$
is holomorphically convex and $D\to \widetilde {D}$ is its Remmert reduction.
\end  {cor}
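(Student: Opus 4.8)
The plan is to read off the product structure from the preceding structure theorem by invoking Grauert's Oka principle, and then to obtain holomorphic convexity and the identification of the Remmert reduction as formal consequences. The starting point is the fibration $p\colon D\to\widetilde D$ supplied by the theorem: its fiber is the compact flag manifold $F=\widetilde P/P$, and its base $\widetilde D$ is a Hermitian symmetric space of noncompact type, hence, as noted in the text, both a topological cell (so contractible) and a Stein manifold.

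First I would realize $D\to\widetilde D$ as a bundle associated to a holomorphic principal bundle. The homogeneous fibration $Z=G/P\to G/\widetilde P=\widetilde Z$ is associated to the holomorphic principal $\widetilde P$-bundle $\pi\colon G\to\widetilde Z$ via the action of $\widetilde P$ on $F=\widetilde P/P$. Restricting over the open set $\widetilde D\subset\widetilde Z$ yields a holomorphic principal $\widetilde P$-bundle $\pi^{-1}(\widetilde D)\to\widetilde D$, and by part (1) of the theorem its associated $F$-bundle is precisely $D\to\widetilde D$. Since $\widetilde P$ is a complex Lie group and $\widetilde D$ is Stein, Grauert's Oka principle identifies the holomorphic and the continuous classifications of such principal bundles; as $\widetilde D$ is contractible, the only class is the trivial one. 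Hence the principal bundle, and with it the associated bundle, is holomorphically trivial, so $D\cong\widetilde D\times F$ biholomorphically over $\widetilde D$. This is the triviality already anticipated in the remark preceding the corollary.

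With the product in hand the remaining assertions are standard. A product of a Stein manifold and a compact complex manifold is holomorphically convex, so $D$ is holomorphically convex. Because $F$ is compact and connected, every $f\in\mathcal O(D)$ is constant on each fiber $\{x\}\times F$ and therefore descends to $\widetilde D$; thus $\mathcal O(D)=p^*\mathcal O(\widetilde D)$, which reconfirms that $p\colon D\to\widetilde D$ is the holomorphic reduction. Finally, on a holomorphically convex manifold the Remmert reduction collapses the maximal compact connected analytic subsets to points and produces a Stein space. Here each such subset maps under $p$ to a single point of the Stein base $\widetilde D$ and hence lies in a fiber, so the fibers $\{x\}\times F$ are exactly the maximal compact connected analytic subsets; contracting them yields the Stein manifold $\widetilde D$. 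Therefore $D\to\widetilde D$ is the Remmert reduction.

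The one point that requires care, and the step I would treat as the main obstacle, is the verification that the Oka principle genuinely applies, i.e.\ that the restricted bundle is a holomorphic principal bundle with a complex Lie structure group over a Stein base, so that contractibility of $\widetilde D$ upgrades topological triviality to holomorphic triviality. Once this is secured, holomorphic convexity, the computation of $\mathcal O(D)$, and the identification of the Remmert reduction all follow directly from the product decomposition together with the compactness and connectedness of $F$.
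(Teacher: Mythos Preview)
Your proposal is correct and follows essentially the same approach as the paper: the paper's argument is the brief paragraph preceding the corollary, which invokes Grauert's Oka principle on the Stein, contractible base $\widetilde D$ to conclude that the bundle $D\to\widetilde D$ is holomorphically trivial, and then reads off the remaining assertions from the product structure. Your write-up simply fills in the details (the principal-bundle realization, the verification of holomorphic convexity, and the identification of the Remmert reduction) that the paper leaves implicit.
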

\noindent
As indicated above our goal here is to describe the connected component at
the identity $\mathrm{Aut}(D)^0$ of the group of holomorphic automorphisms of
any given flag domain $D$.  With certain exceptions which we cover
in detail below, we carried out this project in \cite{H1} by studying the
associated action of $\mathrm{Aut}(D)$ on a certain space (described below)
$\mathcal{C}_q(D)$ of holomorphic cycles. If $D=\widetilde{D}$ is a 
Hermitian symmetric space of noncompact type,  such cycles are
just isolated points and $\mathcal{C}_q(D)=D$.  Thus the cycle space gives
us no additional information.  However, in this case $D$ possesses the
invariant Bergman metric and as a result $\mathrm{Aut}(D)$ is well-understood.

\bigskip\noindent
If $D=\widetilde{D}\times F$ is a product with nontrivial base and fiber, then,
although it is infinite-dimensional, $\mathrm{Aut}(D)$ is in a certain sense easy
to describe: The fibration $D\to \widetilde{D}$ induces a surjective homomorphism
$\mathrm{Aut}(D)\to \mathrm{Aut}(\tilde {D})$.  The kernel is the space 
$\mathrm{Hol}(\widetilde{D},\mathrm{Aut}(F))$ of holomorphic maps from
the base to the complex Lie group $\mathrm{Aut}(F)$ and as a result 
$\mathrm{Aut}(D)=\mathrm{Hol}(\widetilde{D},\mathrm{Aut}(F))\rtimes \mathrm{Aut}(D)$
has semidirect product structure. 

\bigskip\noindent
Having settled the case where $\mathcal{O}(D)\not=\mathbb {C}$, or equivalently
where $D$ is holomorphically convex, we turn to the situation where
$\mathcal{O}(D)=\mathbb {C}$.   In \cite{H1} we showed that $\mathrm{Aut}(D)$
is a (finite-dimensional) Lie group which, with certain exceptions which are handled
below, $\mathrm{Aut}(D)^0=G_0$.  Other than taking care of these exceptional
cases, where in fact $\mathrm{Aut}(D)^0$ contains $G_0$ as a proper Lie subgroup,
here we also make use of an observation of Kollar (\cite{K}) which
leads to a simple proof of $\mathrm{Aut}(D)^0=G_0$ with the possible exceptions.
This proof is given in $\S\ref{connectedness}$.

\bigskip\noindent
Before going into the details of proofs, let us state the main result of the paper.
For this the following classification theorem of A. Onishchik (\cite{O1}) is the
key first step for handling the exceptional cases mentioned above.
\begin {theo}
The following is a list of the flag manifolds $Z$ and (connected) complex simple
Lie groups $G$ and $\widehat{G}$ so that $Z=G/P$ and $\widehat{G}:=\mathrm{Aut}(Z)^0$
properly contains $G$.
\begin {enumerate}
\item
The manifold $Z$ is the odd-dimesional projective space $\mathbb {P}(\mathbb {C}^{2n})$
where, after lifting to simply-connected coverings, 
$G=\mathrm{Sp}_{2n}(\mathbb {C})$ and $\widehat{G}=\mathrm{SL}_{2n}(\mathbb {C})$
\item
The $5$-dimensional complex quadric $Z$ is equipped with the standard action of
$\widehat{G}=\mathrm{SO}_7(\mathbb {C})$ and $G$ is the exceptional complex Lie group
$G_2$ embedded in $\widehat{G}$ as the automorphism group of the octonians.
\item
Equipping $\mathbb {C}^{2n}$ with a non-degenerate complex bilinear form $b$,
$Z$ is the space of $n$-dimensional $b$-isotropic subspaces, $\widehat{G}$ is
the $b$-orthogonal group $\mathrm{SO}_{2n}(\mathbb {C})$ and 
$G$ is the complex orthogonal group $\mathrm{SO}_{2n-1}(\mathbb {C})$ which is embedded
in $\hat{G}$ as the connected component at the identity of the isotropy group
of the $\hat{G}$-action at some nonzero point in $\mathbb {C}^{2n}$.
\end {enumerate}
\end {theo}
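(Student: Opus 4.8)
The plan is to reduce the statement to a cohomological computation and then to a finite combinatorial check over the Dynkin classification. Since $Z=G/P$ is compact, the Lie algebra of $\widehat{G}=\mathrm{Aut}(Z)^0$ is the finite-dimensional space of global holomorphic vector fields $H^0(Z,\mathcal{O}(T_Z))$; the differential of the $G$-action embeds $\mathfrak{g}$ into it, so the assertion $\widehat{G}\supsetneq G$ is equivalent to the strict inequality $\dim H^0(Z,\mathcal{O}(T_Z))>\dim\mathfrak{g}$. Because $Z$ is rational and simply connected, $\widehat{G}$ is again semisimple and $Z=\widehat{G}/\widehat{P}$ is a flag manifold for $\widehat{G}$ as well; thus the problem becomes to determine, for each simple $G$ and each parabolic $P$, whether $Z$ carries infinitesimal automorphisms beyond those coming from $\mathfrak{g}$.

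First I would compute $H^0(Z,\mathcal{O}(T_Z))$ by Bott's theorem. The tangent bundle is the homogeneous vector bundle associated to the isotropy representation of $P$ on $\mathfrak{g}/\mathfrak{p}$. Writing $\mathfrak{g}=\mathfrak{n}^-\oplus\mathfrak{l}\oplus\mathfrak{n}$ for the Levi decomposition determined by $P$ and using the grading $\mathfrak{g}=\bigoplus_j\mathfrak{g}_j$ induced by the parabolic, the $P$-module $\mathfrak{g}/\mathfrak{p}$ carries a filtration whose graded quotients are the irreducible $\mathfrak{l}$-modules $\mathfrak{g}_{-1},\mathfrak{g}_{-2},\dots$. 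Applying Borel--Weil--Bott to each graded piece and chasing the resulting long exact sequences expresses $H^0(Z,\mathcal{O}(T_Z))$ as an explicit $G$-module that always contains the adjoint representation $\mathfrak{g}$; any strictly larger automorphism algebra must come from a dominant weight among the $-\beta$ (with $\beta$ a root of $\mathfrak{n}$) whose $\rho$-shift is regular and lies in the appropriate Weyl chamber. This turns the question into a condition on the marked Dynkin diagram $(\,\mathrm{Dynkin}(G),S\,)$, where $S$ is the set of simple roots defining $P$.

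The main obstacle is the exhaustive case analysis verifying that this Bott condition fails --- so that $H^0(Z,\mathcal{O}(T_Z))=\mathfrak{g}$ --- for all marked diagrams except three. I would organize the check by simple type $A_\ell,\dots,G_2$ and by the subset $S$, showing that the extra-section condition forces a very restrictive local configuration of removed nodes. Carrying this out, the only surviving configurations are the maximal parabolic of $C_n$ at the node giving the standard representation, the spinor node of $B_{n-1}$, and the unique proper parabolic of $G_2$ producing a quadric. This is where essentially all the work lies, since one must rule out the generic case uniformly and track the precise module produced by the surviving weights.

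Finally I would identify $\widehat{G}$ in each of the three surviving cases and confirm strictness by exhibiting the classical transitive action of the smaller group. In the $C_n$ case, $Z=\mathrm{Sp}_{2n}(\mathbb{C})/P=\mathbb{P}^{2n-1}$ and the extra fields complete $\mathfrak{sp}_{2n}$ to $\mathfrak{sl}_{2n}$, using that $\mathrm{Sp}_{2n}$ already acts transitively on lines in $\mathbb{C}^{2n}$. In the $G_2$ case, $Z$ is the $5$-dimensional quadric, $\widehat{\mathfrak{g}}=\mathfrak{so}_7$, and $G_2\subset\mathrm{SO}_7$ acts transitively on $Q^5$. In the $B_{n-1}$ case, the classical isomorphism of spinor varieties identifies $Z$ with the component $\mathrm{SO}_{2n}(\mathbb{C})/\widehat{P}$ of the Grassmannian of maximal $b$-isotropic subspaces, on which $\mathrm{SO}_{2n-1}$ (the stabilizer of a nonisotropic vector) acts transitively, and here $\widehat{\mathfrak{g}}=\mathfrak{so}_{2n}$. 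In each case the inclusion $G\subsetneq\widehat{G}$ is strict with both groups simple, which yields exactly the list.
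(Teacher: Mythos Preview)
The paper does not prove this theorem. It is quoted verbatim as Onishchik's classification result, with the attribution ``the following classification theorem of A.~Onishchik (\cite{O1})'' and no argument supplied; the theorem is used as input for the rest of the paper, not derived there. So there is no ``paper's own proof'' to compare your proposal against.

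As for the proposal itself: your plan via Borel--Weil--Bott for $H^0(Z,T_Z)$ is a standard and essentially workable route to this classification (it is, for instance, the approach in Akhiezer's book on homogeneous complex manifolds), and it is \emph{different} from Onishchik's original method, which proceeded by classifying inclusions of compact Lie groups acting transitively on the same homogeneous space rather than by sheaf cohomology. Your sketch is honest about where the labor lies---the exhaustive Dynkin-diagram check---and the identification of the three surviving cases is correct. One small imprecision: the extra sections are not literally indexed by ``dominant weights among the $-\beta$''; rather, one must track the highest weights of the irreducible $\mathfrak{l}$-constituents of each graded piece $\mathfrak{g}_{-j}$ and test whether their $\rho$-shifts are dominant (Bott index zero). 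This is a bookkeeping point, not a gap in strategy. In short: your proposal is a reasonable outline of a genuine proof, but be aware that the paper you are working from simply cites the result and moves on.
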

Referring to the above list of exceptions as Onishchik's list, our main result can be
stated as follows.
\begin {theo}\label{main theorem}
If $D$ is a $G_0$-flag domain in $Z=G/Q$, then $\mathrm{Aut}(D)$ can be described
as follows:
\begin {enumerate}
\item
If $\mathcal{O}(D)\not=\mathbb {C}$, or equivalently if it is holomorphically convex, 
$D$ is a product $\widetilde{D}\times F$ of a Hermitian symmetric space $\widetilde{D}$ 
of non-compact type and a compact flag manifold $F$, and 
$\mathrm{Aut}(D)$ is correspondingly a semidirect product
$\mathrm{Aut}(D)=\mathrm{Hol}(D, \mathrm{Aut}(F))\rtimes \mathrm{Aut}(\widetilde{D})$.
\item
If $\mathcal{O}(D)=\mathbb{C}$, then $\mathrm{Aut}(D)$ is a finite-dimensional
Lie group of holomorphic transformations on $D$ and, if the complexification
$G$ is the full group $\mathrm{Aut}(Z)^0$, then $\mathrm{Aut}(D)^0=G_0$.
\item
If $\mathcal{O}(D)=\mathbb {C}$ and $G$ is a proper subgroup of 
$\widehat{G}=\mathrm{Aut}(Z)^0$, then in each case of Onishchik's list 
$\mathrm{Aut}(D)^0=\widehat{G}_0$ is a uniquely determined real form
of $\widehat{G}$ which contains $G_0$ as a proper subgroup.
\end {enumerate}
\end {theo}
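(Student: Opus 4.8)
The plan is to dispatch the three cases in increasing order of difficulty, with the two non-convex cases sharing a single engine: an extension principle extracted from Kollar's observation \cite{K} which realizes $\mathrm{Aut}(D)^0$ inside $\widehat{G}=\mathrm{Aut}(Z)^0$. Case (1) is essentially a restatement of the Corollary. Indeed $\mathcal{O}(D)\neq\mathbb{C}$ forces the product structure $D\cong\widetilde{D}\times F$ with $\widetilde{D}$ Stein and $F$ a compact flag manifold, so $D\to\widetilde{D}$ is the Remmert reduction and is therefore canonical and preserved by every automorphism. Functoriality of the Remmert reduction gives the surjection $\mathrm{Aut}(D)\to\mathrm{Aut}(\widetilde{D})$ whose kernel consists of automorphisms acting fibrewise, i.e.\ of $\mathrm{Hol}(\widetilde{D},\mathrm{Aut}(F))$, and the splitting supplied by the product structure yields the asserted semidirect product. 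No input beyond the Oka principle already invoked is needed.

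For case (2) the crux is the extension principle: via Kollar's observation one shows that the action of $\mathrm{Aut}(D)$ extends to the ambient $Z$, so that restriction yields an embedding $\mathrm{Aut}(D)\hookrightarrow\mathrm{Aut}(Z)$ and, on Lie algebras, an isomorphism of $\mathrm{Vect}_{\mathcal{O}}(Z)$ onto $\mathrm{Vect}_{\mathcal{O}}(D)$. (Injectivity is the identity theorem; surjectivity is the content of the extension.) In particular $\mathrm{Vect}_{\mathcal{O}}(D)$ is finite-dimensional, so $\mathrm{Aut}(D)$ is a Lie group, and its identity component lands in $\widehat{G}$. Under the hypothesis $G=\widehat{G}$ this places $\mathrm{Aut}(D)^0$ between $G_0$ and the setwise stabilizer $\mathrm{Stab}_G(D)$, so it suffices to prove $\mathrm{Stab}_G(D)^0=G_0$. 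This last point is a short Lie-algebra argument. Write $\mathfrak{g}=\mathfrak{g}_0\oplus i\mathfrak{g}_0$ and set $\mathfrak{h}=\mathrm{Lie}(\mathrm{Stab}_G(D))\supseteq\mathfrak{g}_0$. Since $\mathfrak{g}_0\subseteq\mathfrak{h}$ and $\mathfrak{h}$ is a subalgebra, the set $W=\{X\in\mathfrak{g}_0:iX\in\mathfrak{h}\}$ is $\mathrm{ad}(\mathfrak{g}_0)$-stable, hence an ideal of the simple algebra $\mathfrak{g}_0$; thus $W=0$ or $W=\mathfrak{g}_0$, giving $\mathfrak{h}=\mathfrak{g}_0$ or $\mathfrak{h}=\mathfrak{g}$. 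The second alternative would make $D$ a $G$-invariant proper nonempty open subset of the compact homogeneous space $Z=G/P$, which is impossible. Hence $\mathfrak{h}=\mathfrak{g}_0$, and squeezing $G_0\subseteq\mathrm{Aut}(D)^0\subseteq\mathrm{Stab}_G(D)^0=G_0$ gives $\mathrm{Aut}(D)^0=G_0$.

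Case (3) runs along the same lines but the conclusion must now single out a real form of the strictly larger $\widehat{G}$. The extension principle again gives $\mathrm{Aut}(D)^0\subseteq\mathrm{Stab}_{\widehat{G}}(D)$, and I would first note that the complexification $L$ of $\mathrm{Aut}(D)^0$ is a connected complex subgroup with $G\subseteq L\subseteq\widehat{G}$; since $G$ already acts transitively on $Z$, so does $L$. By the completeness of Onishchik's classification, the listed inclusions $G\subset\widehat{G}$ admit no intermediate connected complex subgroup acting transitively on $Z$, so $L=G$ or $L=\widehat{G}$. If $L=G$, the argument of case (2), run inside $\mathfrak{g}$, would force $\mathrm{Aut}(D)^0=G_0$; thus the whole point of case (3) is to rule this out by exhibiting genuinely new automorphisms. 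Concretely, in each of the three items of Onishchik's list I would produce the explicit real form $\widehat{G}_0\subseteq\widehat{G}$ with $G_0\subsetneq\widehat{G}_0$ (for the projective-space case $\mathrm{Sp}(p,q)\subseteq\mathrm{SU}(2p,2q)$ and $\mathrm{Sp}_{2n}(\mathbb{R})\subseteq\mathrm{SL}_{2n}(\mathbb{R})$ inside $\mathrm{Sp}_{2n}(\mathbb{C})\subseteq\mathrm{SL}_{2n}(\mathbb{C})$, and analogously for $G_2\subseteq\mathrm{SO}_7(\mathbb{C})$ and for $\mathrm{SO}_{2n-1}(\mathbb{C})\subseteq\mathrm{SO}_{2n}(\mathbb{C})$) and verify that $\widehat{G}_0$ still has $D$ as an open orbit, so that $\widehat{G}_0\subseteq\mathrm{Aut}(D)^0$. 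This forces $L=\widehat{G}$, whence $\mathrm{Aut}(D)^0=\widehat{G}_0$ is a real form of $\widehat{G}$; its uniqueness follows since a real form of $\widehat{G}$ that contains the prescribed $G_0$ and leaves $D$ invariant pins down its conjugacy class.

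The main obstacle is precisely this constructive step in case (3): verifying, case by case, that the candidate real form $\widehat{G}_0$ actually preserves the specific open orbit $D$ and does so without enlarging it. Since $\mathfrak{g}_0$ is a real form only of the smaller $\mathfrak{g}$, the clean simplicity argument of case (2) does not apply verbatim, and it is exactly here that the fine geometry of each entry in Onishchik's list, rather than any soft general argument, must be brought in to certify that $D$ is simultaneously a $G_0$- and a $\widehat{G}_0$-open orbit.
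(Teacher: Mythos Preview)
Your overall architecture matches the paper's: Case~(1) via the Remmert reduction and product structure; Case~(2) via Kollar's finiteness to get $\dim\mathrm{Vect}_{\mathcal O}(D)<\infty$, followed by an integration/extension step placing $\mathrm{Aut}(D)^0$ inside $\widehat G=\mathrm{Aut}(Z)^0$; and Case~(3) by a case-by-case construction of the real form $\widehat G_0\supset G_0$ for each item on Onishchik's list. Your clean simplicity argument for $\mathrm{Stab}_G(D)^0=G_0$ in Case~(2) is a nice explicit touch the paper leaves implicit.

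Two corrections are needed. First, a small overclaim: the extension mechanism the paper extracts from Kollar (finiteness of $\mathrm{Vect}_{\mathcal O}(D)$, then a Tits-type argument in the Grassmannian of $\widehat{\mathfrak g}$) only integrates the \emph{Lie algebra} of holomorphic vector fields on $D$ to an action on $Z$. It yields $\mathrm{Aut}(D)^0\hookrightarrow\widehat G$, not an embedding of the full $\mathrm{Aut}(D)$ into $\mathrm{Aut}(Z)$; the paper explicitly notes this limitation and appeals to \cite{H1} for the statement that $\mathrm{Aut}(D)$ itself is a Lie group. Since you only use the identity component afterwards, this is cosmetic, but the sentence ``restriction yields an embedding $\mathrm{Aut}(D)\hookrightarrow\mathrm{Aut}(Z)$'' should be weakened accordingly.

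Second, and more substantively, your candidate $\widehat G_0$ for $G_0=\mathrm{Sp}_{2n}(\mathbb R)$ is wrong. The group $\mathrm{SL}_{2n}(\mathbb R)$ does \emph{not} stabilize the $\mathrm{Sp}_{2n}(\mathbb R)$-flag domains in $\mathbb P(\mathbb C^{2n})$: its unique open orbit is the connected complement $\mathbb P(\mathbb C^{2n})\setminus\mathbb P(\mathbb R^{2n})$, which properly contains (and mixes) the two $\mathrm{Sp}_{2n}(\mathbb R)$-open orbits $D_+$, $D_-$. The correct enlargement, as the paper works out, is $\widehat G_0=\mathrm{SU}(n,n)$, realized via a Hermitian form $h$ compatible with the symplectic form so that $\mathrm{Sp}_{2n}(\mathbb R)=\mathrm{Sp}_{2n}(\mathbb C)\cap\mathrm{SU}(n,n)$ and $D_\pm$ are precisely the sets of $h$-positive/negative lines. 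This is exactly the ``fine geometry'' you flag at the end, and it illustrates why the constructive step in Case~(3) cannot be guessed from the abstract inclusion $G_0\subset G\subset\widehat G$ alone: one must pick the real form of $\widehat G$ whose open orbits \emph{coincide} with those of $G_0$, and that is what the paper verifies by hand in each of the three Onishchik cases.
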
 
It should be remarked that the simple proof given here of the fact that
if $\mathcal{O}(D)=\mathbb {C}$, then $\mathrm{Aut}(D)^0$ is a Lie group
acting on $Z$ does not yield a proof that in this case full group $\mathrm{Aut}(D)$
is a Lie group.  At the present time we have no other proof of this fact other
than that in \cite{H1}.  
\section{Cycle connectivity}\label{connectedness}
In \cite{H2} we used chains of cycles to study the pseudoconvexity and
pseudoconcavity of flag domains .  We continued the use of these
chains in our study of $\mathrm{Aut}(D)$ in \cite{H1}. Here, in particular
compared to the chains in \cite{K}, it is sufficient to consider chains of a 
very special type which we now introduce.

\bigskip\noindent
A basic fact, which is the tip of the iceberg of Matsuki duality, is that for
a flag domain $D$ any given maximal compact subgroup $K_0$ of $G_0$ has exactly
one orbit $C_0=K_0.z_0$ in $D$ which is a complex submanifold.  In fact it is
the (unique) orbit of minimal dimension.  If $K$ is the complexification of $K_0$, then
since $C_0$ is complex, $K$ stabilizes it.   Denoting $q:=\mathrm{dim}_{\mathbb {C}}C_0$,
we usually regard $C_0$ as a point in the Barlet cycle space $\mathcal{C}_q(D)$, but  for our
purposes here we may regard it as a point in the full Chow space $\mathcal{C}_q(Z)$
where $G$ is acting algebraically.   The group theoretical cycle space of $D$
is then defined as connected open subset
$$
\mathcal{M}(D)=\{g(C_0): g\in G, \ g(C_0)\subset D\}^0
$$
of the orbit of the base cycle $C_0$.
One can show that $\mathcal{M}(D)$ is a closed submanifold of $\mathcal{C}_q(D)$
(See \cite{FHW} for background and a systematic study of these cycle spaces.).   For
the purposes of this paper a chain of cycles is a finite connected union of (supports of) cycles
in $\mathcal {M}(D)$.  We often write such a chain as $(C_1,\ldots ,C_m)$ to indicate
that $C_i\cap C_{i+1}\not=\emptyset$  Using such chains we have the \emph{cycle connection} 
equivalence relation
$$
x\sim y \ \Leftrightarrow \ x \ \text{and} \ y \ \text{are contained in a chain}\,.
$$
Note that this relation is $G_0$-equivariant.  In particular, if $D=G_0/H_0$ then
there is a (possibly not closed) subgroup $I_0$ of $G_0$ which contains $H_0$
so that the quotient of $D$ by this equivalence relation is given by
$G_0/H_0\to G_0/I_0$.  Now if $z_0\in D$ is the base point where
$H_0:=G_{z_0}$ and $K_0.z_0=K.z_0=C_0$ is the base cycle, then, since
$C_0$ is by definition contained in the equivalence class of $z_0$, it is
immediate that $I_0\supset K_0$.  Since $K_0$ is a maximal subgroup
of $G_0$, i.e., any (not necessarily closed) subgroup of $G_0$ which contains
$K_0$ is either $K_0$ or $G_0$, the following is immediate (see also \cite{H1} 
and \cite{H2} for the same proof).
\begin {prop}\label{connected}
The following are equivalent:
\begin {enumerate}
\item
$\mathcal{O}(D)=\mathbb {C}$
\item
$D$ is not holomorphically convex.
\item
There is no nontrivial $G_0$-equivariant holomorphic map of 
$D$ to a Hermitian symmetric space $\widetilde{D}$ of noncompact type.
\item
$D$ is cycle connected.
\end {enumerate}
\end {prop}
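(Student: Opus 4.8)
The plan is to identify the cycle--connection reduction with the holomorphic reduction of Theorem~1.1 and then to read off all four equivalences from the maximality of $K_0$. First I would record the soft implications that come from compactness. Since the base cycle $C_0=K_0.z_0$ is a single compact connected cycle, every $f\in\mathcal{O}(D)$ is constant on $C_0$ by the maximum principle, and more generally constant along any chain $(C_1,\ldots,C_m)$, the values matching across the nonempty intersections $C_i\cap C_{i+1}$. Hence points lying on a common chain are identified both by the cycle relation and by the holomorphic relation, so if $I_0$ denotes the cycle--connection isotropy and $J_0$ the holomorphic--reduction isotropy, then $K_0\subseteq I_0\subseteq J_0$. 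In particular this already gives $(4)\Rightarrow(1)$: if $D$ is cycle connected then $I_0=G_0$, forcing $J_0=G_0$ and $\mathcal{O}(D)=\mathbb{C}$.

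Next I would dispose of $(1)\Leftrightarrow(2)\Leftrightarrow(3)$ using the structure theory already in place. Corollary~1.2 gives $\neg(1)\Rightarrow\neg(2)$, while a noncompact holomorphically convex manifold has a non-point Stein Remmert reduction and hence nonconstant functions, giving $\neg(2)\Rightarrow\neg(1)$. For $(1)\Leftrightarrow(3)$, a nonconstant $G_0$-equivariant map to a Hermitian symmetric space of noncompact type pulls back the abundant holomorphic functions on that Stein target, yielding $\neg(3)\Rightarrow\neg(1)$; conversely Theorem~1.1 exhibits the holomorphic reduction itself as such a nontrivial equivariant map whenever $\mathcal{O}(D)\not=\mathbb{C}$, giving $\neg(1)\Rightarrow\neg(3)$.

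It remains to close the loop with $(1)\Rightarrow(4)$, and this is where the maximality of $K_0$ enters decisively. Because $K_0\subseteq I_0\subseteq J_0$ with $K_0$ a maximal subgroup, each of $I_0,J_0$ equals $K_0$ or $G_0$, and since $I_0\subseteq J_0$ the only configuration not yet excluded is $I_0=K_0$, $J_0=G_0$, i.e. $D$ fails to be cycle connected yet $\mathcal{O}(D)=\mathbb{C}$. Excluding this mixed case is the heart of the matter. When $I_0=K_0$ the cycle class of each point is a single cycle, so through every point of $D$ passes a unique member of $\mathcal{M}(D)$ and these translates $gC_0$ foliate $D$; this produces a $G_0$-equivariant map $D\to\mathcal{M}(D)$, $z\mapsto C_z$, with image $G_0.C_0\cong G_0/K_0$. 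As $\mathcal{M}(D)$ carries its natural complex structure as a cycle space, $G_0/K_0$ inherits a $G_0$-invariant complex structure and is therefore a Hermitian symmetric space of noncompact type; being Stein it carries nonconstant holomorphic functions, whose pullbacks contradict $\mathcal{O}(D)=\mathbb{C}$. This rules out the mixed case, forces $I_0=J_0$, and delivers $(1)\Rightarrow(4)$.

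I expect the genuinely nontrivial point to be precisely this last step: the well-definedness and holomorphicity of the incidence map $z\mapsto C_z$ and the resulting identification of $G_0/K_0$ as a Hermitian symmetric target. Everything else is formal once the inclusions $K_0\subseteq I_0\subseteq J_0$ and the maximality of $K_0$ are in hand, and in fact the argument yields the sharper statement that the two reductions coincide, with common base either a point (cases $(1)$--$(4)$) or the Hermitian symmetric space $G_0/K_0=\widetilde{D}$ of Theorem~1.1.
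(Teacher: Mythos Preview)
Your overall architecture matches the paper's: the equivalence of (1)--(3) is read off from \S\ref{intro}, the implication from cycle connectedness (the paper phrases it as $(4)\Rightarrow(3)$, you as $(4)\Rightarrow(1)$) follows from constancy of holomorphic maps along compact chains, and the substance lies in $\neg(4)\Rightarrow\neg(1)$.

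For that last step your route through the cycle space diverges from the paper's, and the gap you yourself flag is real. You need the incidence map $z\mapsto C_z$ to be holomorphic and its image $G_0.C_0\subset\mathcal{M}(D)$ to be a complex submanifold before you can speak of an inherited $G_0$-invariant complex structure on $G_0/K_0$ or pull back holomorphic functions. Neither is automatic for a real orbit inside a complex cycle space; filling this in (e.g.\ via the universal family over $\mathcal{M}(D)$ together with a constant-rank argument using $G_0$-transitivity) is possible but is exactly the work you have deferred.

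The paper sidesteps this entirely by staying in $Z$. Its key observation is that \emph{$P$ stabilizes $C_0$}. Indeed, once $I_0=K_0$ you know $C_0$ is the unique member of $\mathcal{M}(D)$ through $z_0$; since $P$ is connected and $D$ is open, for $g\in P$ near the identity $g(C_0)$ is again a cycle in $\mathcal{M}(D)$ through $z_0$, hence equals $C_0$, and connectedness of $P$ propagates this to all of $P$. Thus the $G$-stabilizer of $C_0$ contains both $K$ and $P$, and the cycle reduction $G_0/H_0\to G_0/K_0$ is the restriction to $D$ of the holomorphic $G$-equivariant fibration $G/P\to G/\widetilde P$ with $\widetilde P\supset KP$. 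Holomorphicity of the reduction and the complex structure on the base $G_0/K_0$ are now automatic, and since this base is an open $G_0$-orbit in the flag manifold $G/\widetilde P$ with compact isotropy $K_0$, it is precisely the Hermitian symmetric $\widetilde D$ of Theorem~1.1, yielding $\mathcal{O}(D)\neq\mathbb{C}$.

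So your plan is repairable, but the paper's short observation that $P$ stabilizes $C_0$ replaces the cycle-space analysis by the manifestly holomorphic ambient fibration of $Z$, and that is the cleaner way to close the loop.
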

\begin {proof}
The equivalence of the first three conditions follows from the discussion in
$\S\ref{intro}$.   If $D$ is cycle connected, then, since $\widetilde {D}$ is Stein
and therefore every holomorphic map to $\widetilde {D}$ is constant along every
chain, 4.) $\Rightarrow $ 3.).  Conversely, if $D$ is not cycle connected, then the
equivalence class containing the base point $z_0$ is just the cycle $C_0$ which is therefore
stabilized by the $G$-isotropy $P$ as well as $K$.  Since the cycle connection reduction is
given by $G_0/H_0\to G_0/K_0$, it follows that this fibration is the restriction of
the fibration $G/P\to G/\widetilde{P}$ of $Z$ where $\widetilde{P}=KP$ and therefore
the base $G_0/K_0$ is the Hermitian symmetric space $\widetilde{D}$.  In other words,
the cycle connected reduction is just the holomorphic reduction and in particular
$\mathcal{O}(D)\not=\mathbb{C}$.
\end {proof}
\noindent
\textbf{Remark.} For applications in another context, Griffiths, Robles and Toledo recently
gave another proof a result which is essentially equivalent to Proposition \ref{connected}.
(see \cite{GRT}).

\bigskip\noindent
Although it is well-known that $K_0$ is a maximal subgroup of $G_0$, for the convenience
of the reader we would like to give the following nice proof of J. Brun which was pointed
out to us by Keivan Mallahi Karai (see the Appendix of \cite{B}).
\begin {theo}
If  $G$ is a connected simple Lie group, $K$ is a  maximal compact
subgroup and $L$ is an abstract group which contains $K$, then
$L$ is either $G$ or $K$.
\end {theo}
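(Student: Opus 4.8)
The plan is to reduce everything to the geometry of the Riemannian symmetric space $X=G/K$ together with the irreducibility of the isotropy representation. First I would dispose of the compact case: if $G$ is compact then $K=G$ and there is nothing to prove, so assume $G$ is noncompact. Then $X=G/K$ is a Riemannian symmetric space of noncompact type; fix the base point $o=eK$, so that $K$ is exactly the stabilizer of $o$, write $\mathfrak g=\mathfrak k\oplus\mathfrak p$ for the Cartan decomposition, identify $T_oX\cong\mathfrak p$, and recall the two facts I will lean on: the global Cartan decomposition $G=K\exp(\mathfrak p)$, and the irreducibility of the isotropy representation of $K$ on $\mathfrak p$, which holds precisely because $G$ is simple.

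Now suppose $K\subsetneq L$ and pick $g\in L\setminus K$. Writing $g=k\exp(H)$ with $k\in K$ and $0\neq H\in\mathfrak p$, one gets $\exp(H)=k^{-1}g\in L$; conjugating by a suitable element of $K$, which lies in $L$, and using $k\exp(H)k^{-1}=\exp(\mathrm{Ad}(k)H)$, I may assume $H=H_0$ lies in a fixed maximal abelian subspace $\mathfrak a\subset\mathfrak p$ and is still nonzero. The goal is then to prove that the orbit $L\cdot o$ is all of $X$: since $L\supseteq K=\mathrm{Stab}(o)$, the equality $L\cdot o=X=G\cdot o$ forces $L=G$.

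To see that $L\cdot o=X$ I would show it contains a neighbourhood of $o$. The subtlety is that $L$ is only an abstract subgroup, so $L\cdot o$ need not be a submanifold; I get around this by exhibiting an explicit smooth map into the orbit that is submersive at $o$. Consider $\sigma\colon K\to L$, $\sigma(k)=\exp(H_0)\,k\,\exp(-H_0)$, which does land in $L$ because $\exp(\pm H_0)\in L$ and $K\subset L$; for $k_1,\dots,k_r\in K$ set $\tau_j(k)=k_j\sigma(k)k_j^{-1}\in L$ and form the product map $\Pi\colon K^r\to L\cdot o$, $\Pi(a_1,\dots,a_r)=\pi\bigl(\tau_1(a_1)\cdots\tau_r(a_r)\bigr)$, where $\pi\colon G\to X$ is the projection. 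All factors equal the identity at the base point $(e,\dots,e)$, so a direct computation gives $d\Pi_{(e,\dots,e)}(Z_1,\dots,Z_r)=\sum_j\mathrm{Ad}(k_j)\bigl(\sinh(\mathrm{ad}\,H_0)Z_j\bigr)$. Using the restricted-root decomposition relative to $\mathfrak a$ one checks that $\sinh(\mathrm{ad}\,H_0)(\mathfrak k)=\bigoplus_{\lambda(H_0)\neq 0}\mathfrak p_\lambda$, which is nonzero because $H_0\neq 0$ and the restricted roots span $\mathfrak a^*$. Irreducibility of $\mathfrak p$ as a $K$-representation guarantees that the $\mathrm{Ad}(K)$-translates of this nonzero subspace span $\mathfrak p$, so finitely many $k_j$ can be chosen with the image of $d\Pi_{(e,\dots,e)}$ equal to all of $\mathfrak p$.

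With $\Pi$ submersive at the base point, its image, and therefore $L\cdot o$, contains an open neighbourhood of $o$. Because $L\cdot o$ is an $L$-orbit it is then open, by translating the neighbourhood by elements of $L$, and since the $L$-orbits partition the connected manifold $X$ into open sets, $L\cdot o=X$, giving $L=G$ as above. I expect the main obstacle to be exactly the point where the abstractness of $L$ bites: one must extract genuinely smooth, orbit-valued families from the purely algebraic data $K\subset L$ and $\exp(\pm H_0)\in L$, and verify that their combined infinitesimal motion fills $\mathfrak p$. The two ingredients that make this work are the $\sinh(\mathrm{ad}\,H_0)$ computation, which produces a nonzero $M$-stable piece of $\mathfrak p$, and the irreducibility of the isotropy representation, which propagates that piece across all of $\mathfrak p$; everything else is bookkeeping with the Cartan decomposition.
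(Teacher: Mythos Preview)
Your argument is correct and is in spirit the same as Brun's proof reproduced in the paper: both construct a product map $K^r\to L$ built from $K$-conjugates of elements of $L$, compute its differential at the identity, and conclude that $L$ (or equivalently the orbit $L\cdot o$) contains an open set. The difference lies only in how surjectivity of the differential is obtained. Brun first observes that the closure $c\ell(L)$ must equal $G$ (since a closed $L\supsetneq K$ has Lie algebra strictly containing $\mathfrak k$, hence equals $\mathfrak g$ by irreducibility of $\mathfrak g/\mathfrak k$), and then uses continuity and simplicity of $G$ to see that $\sum_{x\in L}\mathrm{Ad}(x)\mathfrak k=\mathfrak g$; a finite subfamily $x_1,\dots,x_m$ then makes $(k_1,\dots,k_m)\mapsto\prod x_ik_ix_i^{-1}$ submersive at the origin. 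You instead fix a single $\exp(H_0)\in L\setminus K$, compute explicitly that $\sinh(\mathrm{ad}\,H_0)(\mathfrak k)$ is a nonzero $M$-stable piece of $\mathfrak p$, and then invoke irreducibility of the isotropy representation of $K$ on $\mathfrak p$ to propagate this via finitely many $\mathrm{Ad}(k_j)$. Your route is more explicit and avoids the closure step; Brun's is shorter and bypasses the restricted-root computation. Either way the essential mechanism---irreducibility plus a submersive product of conjugates---is identical.
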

\begin {proof}
Standard results in the theory of symmetric spaces show that $K$ is connected and
the adjoint representation of $K$ on $\mathfrak{g}/\mathfrak{k}$ is irreducible.
Thus if $\mathfrak{\ell}$ is a Lie subalgebra of $\mathfrak{g}$ which properly
contains $\mathfrak{k}$, then $\mathfrak{\ell}=\mathfrak{g}$.  Thus, if $L$ is
closed, then the result is immediate.   Furthermore, if $L$ is not closed and properly
contains $K$, then
its closure $c\ell(L)$ is the full group $G$.   In that case we let
$\mathfrak {k}'$ be the vector subspace of the Lie algebra $\mathfrak {g}$
of $G$ which is generated by  $\mathrm{Ad}(x)(\mathfrak {k})$ for all $x\in L$.
Since $c\ell (L)=G$, it follows by continuity that $\mathfrak{k}'$ is $G$-invariant
and since $G$ is simple, it is immediate that $\mathfrak{k}'=\mathfrak{g}$.
Therefore there are finitely many elements $x_i\in L$ so that
$$
\mathfrak{g}=\sum_1^m \mathrm{Ad}(x_i)(\mathfrak {k})
$$
and as a result the map 
$$
K^m\to G, \ K(k_1,\ldots, k_m)\mapsto \prod (x_ik_ix_i^{-1})
$$
has maximal rank at the origin.  Thus $L$ contains a compact neighborhood
of the origin and is therefore compact, i.e., contrary to assumption $L=K$.
\end {proof}
\section {Finiteness Theorem}
\noindent
Our original goal in this setting was to show that a flag domain $D$ is
either pseudoconvex or psedoconcave (\cite{H2}).  More precisely, we had
hoped to show that if $D$ is not holomorphically convex, then $C_0$ has
a pseudoconcave neighborhood which is filled out by cycles.  If this would
be possible, then using Andreotti's finiteness theorem (\cite{A}) we would
be able to conclude that the space of sections of any holomorphic vector
bundle, in particular the space $\mathrm{Vect}_{\mathcal {O}}(D)$, is finite-dimensional.
Although we have been successful in constructing such a neighborhood in
a number of cases (\cite{H2}), we have failed do this in general.   Recently, in
a substantially more general setting, Kollar proved the desired finiteness theorem
along with a number of equivalent properties which would follow from the
pseudoconcavity of $D$ (\cite{K}).  Here we make use of Kollar's result, leaving
the question of existence of the pseudoconcave neighborhood open.  

\bigskip\noindent
Formulated in our setting, Kollar's finiteness result can be stated as follows.
\begin {theo}
The space $\Gamma(D,E)$ of sections of any holomorphic vector bundle on
a cycle connected flag domain is finite-dimensional.
\end {theo}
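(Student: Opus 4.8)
The plan is to realize $D$ via the covering family of cycles furnished by the group-theoretic cycle space $\mathcal{M}(D)$ and then to exploit the compactness of the individual cycles. Concretely, I would form the incidence variety $\mathcal{I}=\{(x,C): x\in C\}$ together with its two projections $p\colon \mathcal{I}\to D$ and $\pi\colon \mathcal{I}\to \mathcal{M}(D)$. The map $\pi$ is proper with fibers the cycles $C$, each of which is a compact complex submanifold (a flag manifold for $K$) of dimension $q$; in particular $\Gamma(C,E|_C)$ is finite-dimensional for every $C$, and by Grauert's direct image theorem $\pi_*p^*E$ is a coherent sheaf on $\mathcal{M}(D)$ whose fiber at $C$ is $\Gamma(C,E|_C)$. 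The essential structural input is Proposition \ref{connected}: since $D$ is cycle connected, the cycles cover $D$, the projection $p$ is surjective, and, crucially, the cycles chain-connect $D$. Thus a global section $s\in\Gamma(D,E)$ is faithfully recorded by the family of its restrictions $C\mapsto s|_C$, and the whole problem becomes one of bounding the number of such families.

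First I would reduce the global statement to a statement local along the base cycle $C_0$ by filtering $\Gamma(D,E)$ according to order of vanishing along $C_0$. Writing $V_k$ for the subspace of sections vanishing to order $\geq k$ along $C_0$, the graded quotient $V_k/V_{k+1}$ injects into $\Gamma\!\left(C_0,(E\otimes \mathrm{Sym}^k N^*)|_{C_0}\right)$ via its $k$-th normal symbol, where $N$ is the normal bundle of $C_0$ in $D$; since $C_0$ is compact each of these spaces is finite-dimensional. Moreover $\bigcap_k V_k=0$, because a section vanishing to infinite order along $C_0$ vanishes on an open set and hence, $D$ being connected, everywhere. Consequently $\Gamma(D,E)$ embeds into the formal completion of $E$ along $C_0$, and finite-dimensionality is equivalent to the assertion that this decreasing filtration terminates, i.e.\ that $V_{k_0}=0$ for some finite $k_0$.

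The hard part will be producing a uniform bound $k_0$ on the vanishing order, and this is exactly the step where cycle connectivity must be used in an essential, quantitative way rather than merely to exclude vanishing to infinite order. The idea is that the cycles through points of $C_0$ move and sweep out a full neighborhood of a Zariski-open subset of $C_0$ in $D$; a section vanishing to high order along $C_0$ is then forced to vanish to comparable order along the neighboring cycles, and, propagating along a chain, to vanish on an open subset of $D$ and hence identically. To convert this into a finite, uniform bound I would work over a relatively compact piece of $\mathcal{M}(D)$ containing $C_0$ and use that the order-of-vanishing filtration is induced by a coherent filtration of $\pi_*p^*E$, which by Noetherianity stabilizes, yielding the desired $k_0$. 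This is precisely the finiteness mechanism established in far greater generality by Kollar (\cite{K}); accordingly, the cleanest route is to verify that the covering family $\mathcal{M}(D)$ satisfies the hypotheses of his theorem—a proper covering family of compact cycles chain-connecting $D$, guaranteed by Proposition \ref{connected}—and then to invoke his finiteness result directly.
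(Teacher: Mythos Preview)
Your proposal is broadly on track in that, like the paper, it ultimately rests on Kollar's result in \cite{K}; but the concrete mechanism the paper uses for the ``hard part'' is different from your sketch. The paper does not filter by order of vanishing along the whole cycle $C_0$. Instead it quotes Kollar's key Lemma (Lemma~15 in \cite{K}), which controls propagation of vanishing order from a \emph{point} $z_0\in C$ to all of $C$ for a line bundle $L$, chains this estimate along a connected chain $(C_1,\ldots,C_m)$, and then obtains the uniform bound not by any Noetherianity argument but by a group-theoretic observation: the complexification $K$ has an open dense orbit $\Omega$ in $Z$, so a fixed chain from $C_1$ to a point $z\in\Omega$ can be translated by $k\in K$ near the identity to chains of the \emph{same length} $m$ reaching every point of an open set $U\subset D$. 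Hence a section of $L$ vanishing to sufficiently high order at one point of $C_1$ vanishes on $U$ and therefore identically; the passage from line bundles to vector bundles is then cited from \cite{K}.

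Your Noetherianity step---``work over a relatively compact piece of $\mathcal{M}(D)$ and use that the order-of-vanishing filtration is induced by a coherent filtration of $\pi_*p^*E$''---is the genuine gap: as written it does not produce the conclusion. A relatively compact family of cycles need not cover $D$ (and if it did, compactness would be doing the real work, not Noetherianity); you have not specified which coherent filtration you mean, nor on what Noetherian object it lives, and descending chains of coherent subsheaves on a noncompact base need not stabilize. What is actually needed is a uniform bound on the length of a chain connecting $C_0$ (or a point on it) to a generic point of $D$, and the paper's $K$-orbit argument is exactly the device that supplies this. If you want your outline to stand on its own rather than simply invoking \cite{K}, that is the ingredient to insert in place of the Noetherianity sketch.
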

\noindent
This is an immediate consequence of the same result for line bundles which
in turn is proved using the following Lemma (Lemma 15 in \cite{K}), again
formulated in our restricted context.
\begin {lem} 
Let $L$ be a holomorphic line bundle on $D$. Then, given $d\in \mathbb{N}$
there exists $d_0\in \mathbb{N}$ so that for every $C\in \mathcal{M}_D$ and
any $z_0\in C$ every section $s\in \Gamma(D,L)$ which vanishes of order $d_0$ at
$z_0$ vanishes of order $d$ along $C$.
\end {lem}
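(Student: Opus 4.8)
The plan is to recast the conclusion as a restriction statement on an infinitesimal neighbourhood of $C$, reduce everything to a fixed finite list of bundles on the single flag manifold $C_0$, and then finish by homogeneity and finite-dimensionality. First I would rephrase ``$s$ vanishes of order $d$ along $C$'' as the vanishing of the restriction $s|_{C^{(d-1)}}$ to the $(d-1)$-st infinitesimal neighbourhood $C^{(d-1)}$ of $C$ in $D$, whose structure sheaf is $\mathcal{O}_D/I_C^d$. Filtering $\mathcal{O}_{C^{(d-1)}}$ by the powers $I_C^j/I_C^{j+1}\cong S^j N^*_{C/Z}$ of the conormal sheaf exhibits $L|_{C^{(d-1)}}$ as a successive extension of the bundles $L|_C\otimes S^j N^*_{C/Z}$, $0\le j\le d-1$, on the reduced cycle $C$. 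Since $C$ is a compact flag manifold, each such space of sections is finite-dimensional, and the basic observation is that a section of $L|_{C^{(d-1)}}$ vanishing to sufficiently high order at one point of $C$ is zero: going down the filtration, the leading graded component is a genuine section on the connected manifold $C$, which vanishes identically as soon as it vanishes to infinite order at a point, and one repeats on the next graded piece.

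Next I would make the required order uniform over points of a single cycle. For a fixed bundle $\mathcal{F}$ on $C$ the subspaces $\{t\in\Gamma(C,\mathcal{F}):\mathrm{ord}_z(t)\ge e\}$ decrease to $0$ by the identity theorem and stabilise at some finite $e_0$ because $\Gamma(C,\mathcal{F})$ is finite-dimensional; transitivity of $K$ on $C$, together with the $K$-homogeneity of the natural bundles $L|_C\otimes S^j N^*_{C/Z}$, makes $e_0$ independent of the chosen point. Collecting these integers $e_j$ over the finitely many graded pieces and tracking the shift introduced by the conormal degree, one obtains a bound $d_0$, depending only on this list of bundles, which forces vanishing to order $d$ along that one cycle: if $s$ vanishes to order $d_0$ at $z_0$, its leading graded component vanishes, hence is $0$ provided $d_0\ge e_0$, and one descends the filtration, losing one unit of vanishing order at each of the $d$ steps.

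The main obstacle — and the point at which the geometry of the family $\mathcal{M}_D$ enters — is uniformity of $d_0$ over all cycles $C$. Here I would use that each $C$ is a $G$-translate $g(C_0)$ and that $g$ is a biholomorphism of the ambient $Z$, so it carries $C_0^{(d-1)}$ isomorphically onto $C^{(d-1)}$ (the thickenings in $D$ and in $Z$ agree, being infinitesimal) and identifies $S^j N^*_{C/Z}$ with the fixed bundle $S^j N^*_{C_0/Z}$ on $C_0$. The restricted line bundle $(g^*L)|_{C_0}$ varies holomorphically with $C$, but its isomorphism class lies in $\mathrm{Pic}(C_0)$, which is discrete because $H^1(C_0,\mathcal{O})=0$; since $\mathcal{M}_D$ is connected this class is constant, equal to that of $L|_{C_0}$. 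Thus every graded piece that ever occurs, for any cycle, is one of the fixed bundles $L|_{C_0}\otimes S^j N^*_{C_0/Z}$ on the single flag manifold $C_0$, so the integers $e_j$ are genuinely uniform and $d_0=\max_{0\le j\le d-1}(e_j+j)$ works for every cycle and every point.

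I expect the delicate bookkeeping to be precisely in this last reduction: verifying that the thickening, the conormal filtration, and the isomorphism class of the restricted bundle are transported correctly by $g$ and are constant along the connected parameter space, so that an a priori infinite family of restriction problems collapses to finitely many fixed bundles on $C_0$. Once that is in hand, compactness of $C_0$ and transitivity of $K$ supply the finite, uniform $d_0$ with no further analytic input.
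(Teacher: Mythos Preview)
Your argument is sound, but note that the paper does not actually prove this lemma: it quotes it as Lemma~15 of Koll\'ar \cite{K} and only indicates that ``the proof is given by classical methods which are reminiscent of Siegel's Schwarz Lemma,'' the key point being that each cycle $C$ is swept out by rational curves arising as closures of orbits of one-parameter subgroups. In that approach one bounds the degree of $L$ on each such $\mathbb{P}^1$ uniformly, so a section vanishing to sufficiently high order at $z_0$ dies on every rational curve through $z_0$, hence on $C$, and then iterates to reach order $d$ along $C$.

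Your route is genuinely different and more sheaf-theoretic: instead of a rational-curve sweep you use the conormal filtration of the $(d{-}1)$-st infinitesimal neighbourhood to reduce the question to finitely many fixed bundles $L\vert_{C_0}\otimes S^jN^*_{C_0/Z}$ on the single flag manifold $C_0$, and then invoke finite-dimensionality of their global sections, homogeneity of line bundles on $C_0$, and discreteness of $\mathrm{Pic}(C_0)$ over the connected family $\mathcal{M}_D$ to obtain the uniform $d_0$. This makes the uniformity over cycles completely transparent and avoids any explicit Schwarz-type estimate, at the price of using facts specific to flag manifolds (vanishing of $H^1(C_0,\mathcal{O})$, $K$-homogeneity of every line bundle on $C_0$) that Koll\'ar's argument does not need in his more general setting. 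One small imprecision worth fixing: in your second paragraph $K$ does not preserve a general cycle $C$, only $C_0$; it is the transport by $g\in G$ in your third paragraph that legitimises the point-independence of the $e_j$, so that step really belongs after the reduction to $C_0$.
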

\noindent
The proof is given by classical methods which are reminiscent of Siegel's Schwarz Lemma.
One key point is that $C$ can be filled out by rational curves which in our case are
closures of orbits of $1$-parameter groups. 

\bigskip\noindent
Now, given a chain of cycles $(C_1,\ldots ,C_m)$ with $z_i\in C_i\cap C_{i+1}$,
and given $d_m\in \mathbb {N}$ we apply the Lemma to obtain $d_{m-1}\in \mathbb{N}$
so that if $s$ vanishes of order $d_{m_1}$ at $z_{m-1}$, then it vanishes of order
$d_m$ along $C_m$.  Working backwards to the first cycle in the chain, we
see that the Lemma holds for chains.
\begin {cor}\label{vanishing}
Given $d\in \mathbb{N}$ there exists $d_1\in \mathbb {N}$ so that
for any chain $(C_1,\ldots ,C_m)$ of length $m$ and any $z_1\in C_1$ if $s$ vanishes
of order $d_1$ at $z_1$, then it vanishes of order $d$ along $C_m$.
\end {cor}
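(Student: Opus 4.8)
The plan is to prove the statement by induction on the length $m$ of the chain, treating the Lemma as the single-cycle building block and iterating it once for each cycle. The structural point on which everything rests is that the bound furnished by the Lemma is \emph{uniform}: for a fixed target order $d$ the integer $d_0$ can be chosen to work simultaneously for every cycle $C\in\mathcal M_D$ and every point of $C$. Writing $\phi(d):=d_0$ for this uniform bound, the recursion generated by the induction will therefore depend only on $d$ and on $m$, and not on the particular cycles composing the chain; this independence is exactly what the corollary asserts.

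Alongside the Lemma I would record one elementary observation: if a section $s$ vanishes to order $k$ \emph{along} a cycle $C$, then it vanishes to order $k$ \emph{at} each point $z\in C$. Indeed, in local coordinates $(u,v)$ adapted to $C=\{v=0\}$, the condition that $s$ lie in the $k$-th power of the ideal sheaf of $C$ means that every monomial appearing in $s$ carries at least $k$ factors from the normal directions $v$, so the total vanishing order of $s$ at any $z\in C$ is at least $k$. The role of this remark is that at an overlap point $p\in C_i\cap C_{i+1}$ it converts the conclusion ``order $k$ along $C_i$'' into the hypothesis ``order $k$ at the point $p$'' that is needed to feed the Lemma on the next cycle $C_{i+1}$.

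With these ingredients the induction runs as follows. For $m=1$ the assertion is the Lemma itself, with $d_1=\phi(d)$. For the inductive step, given a chain $(C_1,\dots,C_m)$, apply the inductive hypothesis to the subchain $(C_2,\dots,C_m)$ of length $m-1$ to obtain a constant $d'$, depending only on $d$ and $m-1$, such that any section vanishing to order $d'$ at an arbitrary point of $C_2$ vanishes to order $d$ along $C_m$. Now apply the Lemma to the single cycle $C_1$ with target order $d'$, producing $d_1:=\phi(d')$: if $s$ vanishes to order $d_1$ at any $z_1\in C_1$, then it vanishes to order $d'$ along $C_1$, hence by the observation above to order $d'$ at the overlap point $p\in C_1\cap C_2$, whereupon the inductive hypothesis (applied at $p\in C_2$) forces vanishing to order $d$ along $C_m$. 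The constant $d_1=\phi(d')$ depends only on $d$ and $m$ and is valid for every starting point $z_1\in C_1$, so it is the required uniform bound; unwinding the recursion gives $d_1=\phi^{(m)}(d)$, the $m$-fold iterate of the Lemma's bound.

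I do not anticipate a genuine obstacle once the Lemma is granted: the corollary is a purely formal iteration, and all of the analytic substance --- the Siegel--Schwarz-type estimate and the filling of $C$ by rational curves --- has already been absorbed into the Lemma. The only points demanding care are bookkeeping: keeping the orders of vanishing straight as one passes from one cycle to the next, and, above all, using the uniformity of the Lemma's constant so that the inductively defined $d_1$ depends on $d$ and $m$ alone. Were that constant allowed to vary with the cycle, the iterate $\phi^{(m)}$ would have no meaning valid across all chains of a given length and the statement would collapse; the uniformity built into the Lemma is thus the true hinge of the argument.
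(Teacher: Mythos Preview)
Your argument is correct and is essentially the paper's own proof: the paper simply says that, starting from the target order $d_m=d$ on $C_m$, one applies the Lemma to produce $d_{m-1}$ at a point $z_{m-1}\in C_{m-1}\cap C_m$, and then ``works backwards to the first cycle in the chain.'' Your formal induction, your explicit remark that vanishing along $C_i$ implies vanishing at the overlap point, and your emphasis on the uniformity of $d_0$ in the Lemma merely spell out what the paper leaves implicit; the mathematical content is identical.
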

\noindent
It should be emphasized that for a fixed $d$ the required vanishing order $d_1$
depends on $m$.  Thus to apply this result we need some sort of uniform estimate
for the length of a chain connecting two given points.  This can be given as follows.

\bigskip\noindent
For example, let $C_1$ be a base cycle for a given maximal compact subgroup
$K_0$.  Recall that the complexification $K$ has only finitely many orbits in $Z$
and therefore has a (unique) open dense orbit $\Omega $.  Take $z_1\in C_1$
and any point $z\in \Omega $ and let $(C_1,\ldots ,C_m)$ be a chain connecting
$z_0$ to $z$.  For $k\in K$ sufficiently close to the identity, the chain
$(k(C_1),\ldots ,k(C_m))$ is still contained in $D$.  Thus, since $k(C_1)=C_1$
and $k(z)$ can be an arbitrary point in a sufficiently small neighborhood
$U$ of $z$, we have the desired vanishing theorem.
\begin {cor}
If $s\in \Gamma(D,L)$ vanishes of sufficiently high order at a given point 
$z_1\in C_1$, then it vanishes identically.  In particular, $\Gamma(D,L)$
is finite-dimensional.
\end {cor}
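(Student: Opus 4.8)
The plan is to extract from Corollary \ref{vanishing} a single integer $N$ with the property that any $s\in\Gamma(D,L)$ vanishing to order $N$ at the one point $z_1$ is forced to vanish identically. Granting this, finite-dimensionality is immediate: the linear map sending $s$ to its $N$-jet at $z_1$ is then injective, and the space $J^N_{z_1}(L)$ of such jets is finite-dimensional.

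The obstacle, stressed in the text, is that the order $d_1$ delivered by Corollary \ref{vanishing} depends on the chain length $m$; connecting $z_1$ to arbitrary points of $D$ by chains of unbounded length therefore yields no uniform $N$. The resolution rests on two properties of the base cycle $C_1=C_0$: the complexification $K$ leaves it invariant and moreover acts transitively on it, so that $C_1=K.z_1$. Accordingly I would first fix once and for all a single chain $(C_1,\dots,C_m)$ of a definite length $m$ joining $z_1$ to a point $z$ of the open dense $K$-orbit $\Omega$, chosen to lie in $D$ (possible since $\Omega$ is dense and $D$ open). With $m$ now frozen, Corollary \ref{vanishing} (applied with target order $1$) provides a fixed $d_1$ such that any section vanishing to order $d_1$ at the initial point of a length-$m$ chain vanishes along its terminal cycle.

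The next step is to spread high-order vanishing from the single point $z_1$ over the whole base cycle. Applying the Lemma to $C_1$ itself, if $s$ vanishes to order $N:=d_0(d_1)$ at $z_1$ then it vanishes to order $d_1$ along all of $C_1$, hence to order $d_1$ at every point $k.z_1$ with $k\in K$. Now I would translate the fixed chain by elements $k$ near the identity $e$. Since $k$ stabilizes $C_1$ as a set, the chain $(C_1,k.C_2,\dots,k.C_m)$ again has length $m$, lies in $D$ for $k$ close to $e$, and begins at $k.z_1\in C_1$, where $s$ vanishes to order $d_1$. By the choice of $d_1$ it follows that $s$ vanishes along $k.C_m$, and in particular $s(k.z)=0$. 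As $k$ runs through a neighborhood of $e$, the points $k.z$ fill a neighborhood $U$ of $z$ (the orbit map $K\to\Omega$ being a submersion), so $s\equiv 0$ on the nonempty open set $U\subset D$.

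Finally, since $D$ is connected, the identity theorem upgrades vanishing on $U$ to $s\equiv 0$ on all of $D$. Thus $N=d_0(d_1)$ has the required property and $\dim\Gamma(D,L)\le\dim J^N_{z_1}(L)<\infty$. I expect the length-dependence of $d_1$ to be the only genuine obstacle; the entire device is that transitivity of $K$ on the invariant base cycle $C_1$ permits sweeping the endpoint of a chain over an open set while keeping the chain length, and hence $d_1$, fixed.
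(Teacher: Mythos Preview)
Your argument is correct and follows the paper's approach: fix one chain of length $m$ reaching the open $K$-orbit, translate it by $k\in K$ near the identity using $k(C_1)=C_1$, and apply the identity principle on the resulting neighborhood $U$ of $z$. One simplification the paper exploits and you miss: your extra invocation of the Lemma (to spread order-$d_1$ vanishing from $z_1$ across $C_1$ so that the translated chain can start at $k.z_1$) is unnecessary, because the fixed point $z_1$ itself already lies in $k(C_1)=C_1$ for every $k$, so each translated chain may be taken to start at $z_1$ and $N=d_1$ already suffices.
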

\begin {proof}
Since the required vanishing order $d_1$ only depends on the number
$d_m$ and the length $m$, Corollary \ref{vanishing} implies that if
$s$ vanishes of order $d_1$ at $z_1$, then it vanishes at every point
of the set $U$ which was constructed above.  The desired result then
follows from the identity principle.
\end {proof}
\noindent
As we remarked above, the finiteness theorem for vector bundles is
an immediate consequence of this Corollary (see \cite{K}, p. 8).
\section{Integrability of vector fields}
The following is the main result of this section.
\begin {theo}
Let $Z=G/Q$ be a complex flag manifold and $\widehat{\mathfrak{g}}$ be a finite-dimensional
complex Lie algebra which contains $\mathfrak{g}:=\mathrm{Lie}(G)$.
Let $\widehat{G}$ be a complex Lie group which contains $G$ and is 
associated to $\widehat{\mathfrak{g}}$.
If $\widehat{\mathfrak {q}}$ is a complex subalgebra of $\widehat{\mathfrak {g}}$ so
that the quotient map 
$\widehat{\mathfrak{g}}\to \widehat{\mathfrak {g}}/\widehat{\mathfrak {q}}$ induces
an isomorphism
$$
\widehat{\mathfrak {g}}/\widehat{\mathfrak {q}}=\mathfrak{g}/\mathfrak {q}\,,
$$
then $\widehat{G}$ acts holomorphically  on $Z$ with
$$
Z=\widehat{G}/\widehat{Q}=G/Q\,.
$$ 
\end {theo}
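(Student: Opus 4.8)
The plan is to construct a holomorphic action of $\widehat{G}$ on $Z$ by first extending the infinitesimal action of $\mathfrak{g}$ to all of $\widehat{\mathfrak{g}}$ and then integrating, using the compactness of $Z$. I fix the base point $z_0=eQ$, identify $T_{z_0}Z=\mathfrak{g}/\mathfrak{q}$, and record that the hypothesis $\widehat{\mathfrak{g}}/\widehat{\mathfrak{q}}=\mathfrak{g}/\mathfrak{q}$ is equivalent to $\widehat{\mathfrak{g}}=\mathfrak{g}+\widehat{\mathfrak{q}}$ together with $\mathfrak{g}\cap\widehat{\mathfrak{q}}=\mathfrak{q}$, so that the projection $\mathrm{proj}\colon\widehat{\mathfrak{g}}\to\widehat{\mathfrak{g}}/\widehat{\mathfrak{q}}$ furnishes an identification $T_{z_0}Z=\widehat{\mathfrak{g}}/\widehat{\mathfrak{q}}$.

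First I would attach to each $\xi\in\widehat{\mathfrak{g}}$ a global holomorphic vector field $V_\xi$ on $Z$ that reduces to the usual fundamental field when $\xi\in\mathfrak{g}$. Choosing local holomorphic sections $\sigma\colon U\to G$ of the bundle $G\to Z$, I set $V_\xi(z)=\sigma(z)_*\,\mathrm{proj}\bigl(\mathrm{Ad}(\sigma(z)^{-1})\xi\bigr)$. The observation that makes these local expressions glue is that $Q$ normalizes $\widehat{\mathfrak{q}}$: since $\mathfrak{q}\subseteq\widehat{\mathfrak{q}}$ and $\widehat{\mathfrak{q}}$ is a subalgebra we have $[\mathfrak{q},\widehat{\mathfrak{q}}]\subseteq\widehat{\mathfrak{q}}$, and as $Q$ is parabolic, hence connected, $\mathrm{Ad}(Q)\widehat{\mathfrak{q}}=\widehat{\mathfrak{q}}$. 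Thus $\mathrm{Ad}(Q)$ descends to $\widehat{\mathfrak{g}}/\widehat{\mathfrak{q}}$ and, under our identification, agrees with the isotropy representation on $\mathfrak{g}/\mathfrak{q}$; the change of section $\sigma\mapsto\sigma q$ therefore leaves the formula unchanged, and the $V_\xi$ patch to a global holomorphic field. This yields a linear map $\rho\colon\xi\mapsto V_\xi$ extending the infinitesimal $G$-action, with $V_\xi(z_0)=\mathrm{proj}(\xi)$.

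The main point, and the one I expect to require real work, is that $\rho$ is a homomorphism of Lie algebras, i.e.\ $[V_\xi,V_{\xi'}]=V_{[\xi,\xi']}$. I would verify this on germs at $z_0$. Choosing $\mathfrak{m}\subseteq\mathfrak{g}$ with $\mathfrak{g}=\mathfrak{q}\oplus\mathfrak{m}$ (whence also $\widehat{\mathfrak{g}}=\widehat{\mathfrak{q}}\oplus\mathfrak{m}$), the chart $\mathfrak{m}\to Z$, $X\mapsto\exp(X)z_0$, turns $V_\xi$ into $\exp(X)_*\,\mathrm{proj}\bigl(e^{-\mathrm{ad}\,X}\xi\bigr)$, which is exactly the canonical vector field of the local Klein geometry attached to the pair $(\widehat{\mathfrak{g}},\widehat{\mathfrak{q}})$; the latter assignment is classically a Lie algebra homomorphism into germs of holomorphic fields, being the fundamental-field map of the local transitive action furnished by Lie's theorems. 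As holomorphic vector fields on the connected manifold $Z$ are determined by their germ at $z_0$, the identity $[V_\xi,V_{\xi'}]-V_{[\xi,\xi']}=0$ then holds globally.

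Finally I would integrate. Because $Z$ is a compact flag manifold, as recalled in the introduction its algebra $\mathrm{Vect}_{\mathcal{O}}(Z)$ is finite-dimensional and equals the Lie algebra of the complex Lie group $\mathrm{Aut}(Z)^0$; in particular every holomorphic field on $Z$ is complete. Hence $\rho$ is a homomorphism into $\mathrm{Lie}(\mathrm{Aut}(Z)^0)$ and, by Lie's second theorem, integrates to a holomorphic action on $Z$ of the simply connected complex group with Lie algebra $\widehat{\mathfrak{g}}$; since the given $G\subseteq\widehat{G}$ already acts and integrates $\rho|_{\mathfrak{g}}$, this action is that of $\widehat{G}$ itself and extends the $G$-action. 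As $G$ acts transitively, so does $\widehat{G}$; writing $\widehat{Q}$ for the stabilizer of $z_0$, which is automatically closed, one obtains $Z=\widehat{G}/\widehat{Q}=G/Q$. The differential of the orbit map at $z_0$ is $\xi\mapsto V_\xi(z_0)=\mathrm{proj}(\xi)$, whose kernel is $\widehat{\mathfrak{q}}$, so $\mathrm{Lie}(\widehat{Q})=\widehat{\mathfrak{q}}$, completing the proof.
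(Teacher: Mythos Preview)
Your argument is correct, but the paper proceeds quite differently, via a short trick attributed to Tits that bypasses the local construction and the bracket verification entirely. Instead of building the fields $V_\xi$ by hand, the paper regards $x_0:=\widehat{\mathfrak q}$ as a point of the Grassmannian $X=\mathrm{Gr}_k(\widehat{\mathfrak g})$, on which $\widehat G$ already acts globally and holomorphically via $\mathrm{Ad}$. The $\widehat G$-stabilizer of $x_0$ is the normalizer $\widehat N$ of $\widehat{\mathfrak q}$; its intersection with $G$ normalizes $\widehat{\mathfrak q}\cap\mathfrak g=\mathfrak q$, hence is contained in $Q$ (parabolics are self-normalizing), while clearly $Q\subseteq\widehat N$, so $G.x_0\cong G/Q=Z$. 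Since $\dim \widehat{\mathfrak g}/\widehat{\mathfrak q}=\dim\mathfrak g/\mathfrak q$, the $\widehat G$-orbit has the same dimension, so the compact $G$-orbit is open, hence equal to it, and $\widehat G$ acts on $Z$.

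The trade-off is clear: the Tits argument is slicker and avoids any local analysis or appeal to Lie's theorems, at the cost of the auxiliary embedding into a Grassmannian; your approach is more constructive and makes the extended infinitesimal action explicit, but requires checking well-definedness under change of section and the bracket identity via the local Klein model. Both rely on the same algebraic core (namely $\mathfrak q\subseteq\widehat{\mathfrak q}$, $\mathfrak g\cap\widehat{\mathfrak q}=\mathfrak q$, and $\mathfrak g+\widehat{\mathfrak q}=\widehat{\mathfrak g}$) and on compactness of $Z$ to pass from the Lie algebra to the group.
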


\begin {proof}
We apply a basic idea of Tits.  For this regard $x_0:=\widehat{\mathfrak {q}}$ as a point
in the Grassmannian $X:=\mathrm{Gr}_k(\widehat{\mathfrak{g}})$ of subspaces of
dimension $k=\mathrm{dim}_{\mathbb {C}}\widehat{\mathfrak {q}}$ in $\widehat{\mathfrak{g}}$.
The isotropy group at $x_0$ of the $\widehat{G}$-action on $X$ is the normalizer
$$
\widehat{N}=\{\widehat{g}\in \hat{G}: \mathrm{Ad}(\hat{g})(\widehat{\mathfrak{q}})=\widehat{\mathfrak{q}}\}\,.
$$  
Denote by $N=\widehat{N}\cap G$ the $G$-isotropy at $x_0$ and note that if $g\in N$
and $\xi\in \mathfrak{q}$, it follows that 
$\mathrm{Ad}(g)(\xi)\in \widehat{\mathfrak {q}}\cap \mathfrak{g}=\mathfrak{q}$.  In other words
$N$ is contained in the normalizer of $\mathfrak{q}$ in $\mathfrak {g}$.  Since the parabolic
group $Q$ is self-normalizing in $G$, it follows that $N\subset Q$.  But 
$\widehat{\mathfrak{n}}\supset \widehat{\mathfrak{q}}$ and $\widehat{\mathfrak{q}}\cap \mathfrak{g}=
\mathfrak{q}$.  Therefore $\mathfrak{n}\supset \mathfrak{q}$.  Consequently $N=Q$
and the $G$-orbit of $\widehat{\mathfrak{q}}$ is the compact manifold $Z=G/Q$.  Since 
$\widehat{\mathfrak {g}}/\widehat{\mathfrak{q}}=\mathfrak{g}/\mathfrak{q}$, the $\widehat{G}$-orbit
of $\widehat{\mathfrak{q}}$ has dimension at most that of $G/Q$.  But on the other hand
$\widehat{G}\supset G$ and therefore the $\widehat{G}$-orbit has the same dimension
as the $G$-orbit.  Consequently $G.x_0$ is open in $\widehat{G}.x_0$ and the
compactness of $G.x_0$ implies that these  orbits agree.
\end {proof}
\noindent
Applying the Finiteness Theorem, the following is now immediate.
\begin {cor}
Let $G_0$ be a simple real form of a complex semisimple Lie group $G$
and $D$ be a cycle connected $G_0$-flag domain in a $G$-flag manifold $Z=G/Q$.  
Let $\widehat{\mathfrak{g}}$ be the Lie algebra  of holomorphic vector fields on $D$. 
Then the restriction mapping $R:\mathfrak{aut}(Z)\to \widehat{\mathfrak{g}}$ is an
isomorphism and the action of $\widehat{\mathfrak {g}}$ can be integrated to
the action of a connected complex Lie group $\widehat{G}$ which is thereby identified
with $\mathrm{Aut}(Z)^0$.
\end {cor}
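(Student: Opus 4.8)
The plan is to derive the statement from the two results just established: the Finiteness Theorem supplies finite-dimensionality and the Integrability Theorem supplies the global group action, after which the claim becomes a matter of matching the two. I would first set up $R$ and dispose of the easy half. Every field in $\mathfrak{aut}(Z)=\mathrm{Vect}_{\mathcal{O}}(Z)$ restricts to a holomorphic field on the open subset $D$, so $R$ is a well-defined homomorphism of complex Lie algebras. It is injective, for a field on the connected manifold $Z$ that vanishes on the nonempty open set $D$ vanishes identically by the identity principle. Since $G\subset \mathrm{Aut}(Z)^0$, we have $\mathfrak{g}\subset \mathfrak{aut}(Z)$ and $R(\mathfrak{g})=\mathfrak{g}\subset \widehat{\mathfrak{g}}$; thus the entire content of the first assertion is the surjectivity of $R$, i.e. that every holomorphic vector field on $D$ extends (necessarily uniquely) to $Z$.

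To obtain surjectivity I would manufacture a one-sided inverse of $R$ from a group action. The Finiteness Theorem applied to the holomorphic tangent bundle $TD=TZ|_D$ shows that $\widehat{\mathfrak{g}}=\Gamma(D,TD)$ is finite-dimensional; it is a complex Lie algebra containing $\mathfrak{g}$. Fixing the base point $z_0\in D$ with $G$-isotropy $\mathfrak{q}$, I set
$$
\widehat{\mathfrak{q}}:=\{X\in \widehat{\mathfrak{g}}: X(z_0)=0\}\,,
$$
the kernel of the evaluation $\mathrm{ev}_{z_0}\colon \widehat{\mathfrak{g}}\to T_{z_0}D$. Already on the subalgebra $\mathfrak{g}$ this evaluation is onto $T_{z_0}Z=T_{z_0}D$ with kernel $\mathfrak{q}$, because $Z=G/Q$ is $G$-homogeneous; hence $\widehat{\mathfrak{g}}=\mathfrak{g}+\widehat{\mathfrak{q}}$ and $\mathfrak{g}\cap \widehat{\mathfrak{q}}=\mathfrak{q}$, so the inclusion induces an isomorphism $\mathfrak{g}/\mathfrak{q}\cong \widehat{\mathfrak{g}}/\widehat{\mathfrak{q}}$. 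This is exactly the hypothesis of the Integrability Theorem. Choosing a connected complex Lie group $\widehat{G}$ with Lie algebra $\widehat{\mathfrak{g}}$ into which $G$ embeds as the integral subgroup of $\mathfrak{g}$, the theorem then produces a holomorphic $\widehat{G}$-action on $Z$ with $Z=\widehat{G}/\widehat{Q}=G/Q$.

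It remains to see that this action reproduces the given fields, which is the crux of the argument. The fundamental fields of the $\widehat{G}$-action form a copy of $\widehat{\mathfrak{g}}$ inside $\mathfrak{aut}(Z)$; on the other hand $\widehat{\mathfrak{g}}$ is by definition a finite-dimensional Lie algebra of holomorphic fields on $D$ and so, by the uniqueness in Lie's theorem on the integration of such an algebra to a local transformation group, it generates near $z_0$ a local $\widehat{G}$-action on $D$ whose fundamental fields are the given $X$. Since the global action from the Integrability Theorem is a local $\widehat{G}$-action near $z_0$ with the same infinitesimal generators, the two agree near $z_0$; hence the fundamental field of each $X$ restricts to $X$ on a neighborhood of $z_0$, and therefore on all of the connected $D$ by the identity principle. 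Thus $X\mapsto (\text{the fundamental field of } X)$ is a right inverse of $R$, and combined with injectivity $R$ is an isomorphism $\mathfrak{aut}(Z)\cong \widehat{\mathfrak{g}}$.

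Finally, the identification $\widehat{G}=\mathrm{Aut}(Z)^0$ is formal: the action gives a homomorphism $\widehat{G}\to \mathrm{Aut}(Z)^0$ whose differential is the isomorphism $R^{-1}$, hence a covering onto the connected group $\mathrm{Aut}(Z)^0$; as the kernel of the action on $Z=\widehat{G}/\widehat{Q}$ is the largest ideal contained in the self-normalizing parabolic $\widehat{\mathfrak{q}}$ and hence trivial, the map is an isomorphism. The main obstacle I anticipate is precisely the compatibility step of the third paragraph --- ensuring that the action produced abstractly by the Tits-type argument of the Integrability Theorem restricts on $D$ to the originally given vector fields --- together with the subsidiary point of realizing $\widehat{G}$ so that it genuinely contains $G$ rather than a proper cover, so that the two meanings of ``$\mathfrak{g}$-fields on $D$'' coincide.
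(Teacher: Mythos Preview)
Your argument is correct and is precisely the route the paper intends; its own proof consists of the single sentence ``Applying the Finiteness Theorem, the following is now immediate.''

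The compatibility step you flag is indeed the only real content, and your phrasing via ``uniqueness in Lie's theorem'' is slightly circular as it stands: uniqueness tells you that a given Lie algebra of \emph{fields} determines the local action, whereas what you need is the converse, namely that the Tits action $\beta$ reproduces the original fields $X$ on $D$. Writing ``with the same infinitesimal generators'' presupposes exactly what is to be shown. The clean fix is to compare orbit maps at $z_0$. Both the local action $\alpha$ generated by $\widehat{\mathfrak g}$ on $D$ and the global Tits action $\beta$ restrict to the original $G$-action, and both have isotropy algebra $\widehat{\mathfrak q}$ at $z_0$ (for $\beta$ this is because $\widehat{\mathfrak n}=\widehat{\mathfrak q}$ by the dimension equality $\widehat{\mathfrak g}/\widehat{\mathfrak q}=\mathfrak g/\mathfrak q$). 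Hence both orbit maps factor through $\widehat G/\widehat Q$, and they agree on the image of $G$; since $\mathfrak g+\widehat{\mathfrak q}=\widehat{\mathfrak g}$, the image of $G$ already covers a neighborhood of the identity coset, so the two orbit maps, and therefore the two local actions, coincide near $z_0$. Their fundamental fields then agree near $z_0$ and hence on all of $D$ by the identity principle, giving $R\circ\Phi=\mathrm{id}$ as you want. Your final paragraph then goes through verbatim, since at that point $\widehat{\mathfrak g}\cong\mathfrak{aut}(Z)$ is semisimple and $\widehat{\mathfrak q}$ is genuinely parabolic.
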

\noindent
As a consequence we have the description of $\mathrm{Aut}(D)$ which
was proved by other means in \cite{H1}.
\begin {cor}
If $D$ is a $G_0$-flag domain $Z=G/Q$, then one of the following holds:
\begin {enumerate}
\item
If $D$ is holomorphically convex, it is a product of a compact flag
manifold and Hermitian symmetric space of noncompact type and
$\mathrm{Aut}(D)^0$ can be described as in $\S\ref{intro}$.
\item
If $D$ is not holomorphically convex or equivalently it is cycle connected,
then $\mathrm{Aut}(D)^0$ is a finite-dimensional Lie group which is
acting on $Z$ and agrees with $G_0$ with the possible exceptions
in the situations classified by Onishchik where $G$ is a proper subgroup
of $\widehat{G}=\mathrm{Aut}(Z)^0$.  
\end {enumerate}
\end {cor}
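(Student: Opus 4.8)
The plan is to split along the dichotomy of Proposition~\ref{connected}. When $D$ is holomorphically convex one has $\mathcal{O}(D)\not=\mathbb{C}$, and the first assertion is then only a repackaging of the holomorphic reduction Theorem, its Corollary, and the description of the semidirect product structure recorded in $\S\ref{intro}$; I would simply assemble these with no further work. Everything of substance lies in the cycle connected case, to which I now turn.

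So suppose $D$ is not holomorphically convex, equivalently cycle connected. I would begin from the preceding Corollary, which realizes $\widehat{\mathfrak{g}}=\mathrm{Vect}_{\mathcal{O}}(D)$ as $\mathfrak{aut}(Z)$ through the restriction isomorphism $R$ and integrates it to the connected complex group $\widehat{G}=\mathrm{Aut}(Z)^0$ acting on $Z$. The decisive step is to identify $\mathrm{Aut}(D)^0$ with the identity component $\widehat{G}_0$ of the stabilizer of $D$ in $\widehat{G}$. One inclusion is immediate, since $\widehat{G}_0$ preserves $D$, acts there holomorphically, and is connected. For the reverse inclusion I would observe that the generator of any one-parameter group in $\mathrm{Aut}(D)^0$ is a holomorphic vector field on $D$, hence lies in $\widehat{\mathfrak{g}}$; by $R$ it extends to $Z$ and integrates within $\widehat{G}$, and since its flow preserves $D$ this one-parameter group lies in the stabilizer. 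As $\widehat{\mathfrak{g}}$ is finite-dimensional, this already exhibits $\mathrm{Aut}(D)^0$ as a finite-dimensional Lie group acting on $Z$.

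Granting the identification $\mathrm{Aut}(D)^0=\widehat{G}_0$, the alternative in the second assertion follows from Onishchik's classification. If $G$ is a proper subgroup of $\widehat{G}$, then $(Z,G,\widehat{G})$ occurs in Onishchik's list, and this is exactly the exceptional situation the Corollary allows. If instead $G=\widehat{G}=\mathrm{Aut}(Z)^0$, then $\mathrm{Aut}(D)^0=(G_D)^0$, and it remains to prove $(G_D)^0=G_0$. Since $D$ is a $G_0$-orbit, $G_0\subseteq G_D$, so $\mathfrak{h}:=\mathrm{Lie}((G_D)^0)$ is a real subalgebra of $\mathfrak{g}$ containing the real form $\mathfrak{g}_0$. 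The crux is that a simple real form is a maximal real subalgebra: setting $\mathfrak{b}:=\mathfrak{h}\cap i\mathfrak{h}$, one checks that $\mathfrak{b}$ is a complex ideal of $\mathfrak{g}=\mathfrak{h}+i\mathfrak{h}$, so by the (semi)simplicity of $\mathfrak{g}$ together with the fact that $\mathfrak{g}_0$ surjects onto every simple factor, either $\mathfrak{b}=0$, forcing $\mathfrak{h}=\mathfrak{g}_0$ by a dimension count, or $\mathfrak{b}=\mathfrak{g}$, forcing $\mathfrak{h}=\mathfrak{g}$. The latter would make $(G_D)^0=G$ preserve the open set $D$; as $G$ is transitive on $Z$ this gives $D=Z$, contrary to $D\subsetneq Z$. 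Hence $\mathfrak{h}=\mathfrak{g}_0$ and $(G_D)^0=G_0$.

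The step I expect to be the main obstacle is the identification $\mathrm{Aut}(D)^0=\widehat{G}_0$, that is, ruling out automorphisms of $D$ not realized by $\widehat{G}$. The infinitesimal argument above controls the Lie algebra, but to show that no component of automorphisms near the identity escapes $\widehat{G}$ I would let $\phi\in\mathrm{Aut}(D)$ act by pushforward on the finite-dimensional Lie algebra $\widehat{\mathfrak{g}}=\mathfrak{aut}(Z)$; since $\mathrm{Aut}(\widehat{\mathfrak{g}})^0=\mathrm{Ad}(\widehat{G})$, for $\phi$ near the identity one has $\phi_*=\mathrm{Ad}(\widehat{g})$ with $\widehat{g}\in\widehat{G}$, and then $\phi\circ\widehat{g}^{-1}$ centralizes the transitive $G$-action on $Z$ and must be trivial. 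This is precisely the delicate point underlined in the remark following the Main Theorem: the method pins down $\mathrm{Aut}(D)^0$ but says nothing about the full group $\mathrm{Aut}(D)$.
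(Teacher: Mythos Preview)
Your proposal is correct and is precisely the argument the paper leaves implicit: in the text the corollary is stated without proof as ``a consequence'' of the preceding corollary identifying $\mathrm{Vect}_{\mathcal O}(D)$ with $\mathfrak{aut}(Z)$, and you have supplied exactly the natural details---the identification of $\mathrm{Aut}(D)^0$ with the identity component of the stabilizer of $D$ in $\widehat G=\mathrm{Aut}(Z)^0$, followed by the maximality of a simple real form $\mathfrak g_0\subset\mathfrak g$ among real subalgebras to force $\mathrm{Aut}(D)^0=G_0$ when $G=\widehat G$. One cosmetic point: in your final paragraph the composition should be written as $\widehat g^{-1}\circ\phi:D\to Z$ (rather than $\phi\circ\widehat g^{-1}$) so that domains match; the $G_0$-equivariance of this map together with the fact that the isotropy $H_0=G_0\cap Q$ contains a Cartan subgroup of $G_0$ (hence has isolated fixed points in $Z$) then pins down $\phi=\widehat g\vert_D$ for $\phi$ near the identity, exactly as you intend.
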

\section{Exceptional cases}
\noindent
To complete our project of understanding the automorphism groups of flag domains,
we must analyze the exceptional cases indicated in the above Corollary.  We do
this here, proving the following result.
\begin {theo}\label{exceptional}
Suppose that $D\subset Z=G/Q$ is a $G_0$-flag domain which is
not holomorphically convex and that $G$ is properly contained in
complex Lie group $\widehat{G}=\mathrm{Aut}(Z)^0$. 
Then there is a uniquely determined real form 
$\widehat{G}_0=\mathrm{Aut}(D)^0$ of 
$\widehat{G}$ which properly contains $G_0$ and which stabilizes $D$. 
\end {theo}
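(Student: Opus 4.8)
The plan is to analyze each of the three cases in Onishchik's list separately, since in each case $\widehat{G}$ and the embedding $G\hookrightarrow \widehat{G}$ are explicitly known. By the preceding Corollary, when $D$ is not holomorphically convex the Lie algebra $\widehat{\mathfrak g}$ of holomorphic vector fields on $D$ is isomorphic via restriction to $\mathfrak{aut}(Z)=\widehat{\mathfrak g}$, and integrates to $\widehat G=\mathrm{Aut}(Z)^0$ acting on $Z$. Thus $\mathrm{Aut}(D)^0$ is precisely the subgroup of $\widehat G$ stabilizing $D$, and the task reduces to identifying this stabilizer as a specific real form $\widehat G_0$ of $\widehat G$ containing $G_0$.

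First I would set up the general mechanism: since $G_0\subset \mathrm{Aut}(D)^0\subset \widehat G$ and $\mathrm{Aut}(D)^0$ is the $\widehat G$-stabilizer of the open orbit $D$, I would show that $\mathrm{Aut}(D)^0=\widehat G_0$ must be a real form of $\widehat G$. The point is that $\mathrm{Aut}(D)^0$ is a real Lie subgroup whose Lie algebra $\widehat{\mathfrak g}_0$ sits between $\mathfrak g_0$ and $\widehat{\mathfrak g}$; because $D$ is an \emph{open} orbit, the stabilizer has full real dimension equal to $\dim_{\mathbb R}\widehat G - \dim_{\mathbb R} D$, forcing $\widehat{\mathfrak g}_0$ to be a real form of $\widehat{\mathfrak g}$. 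Uniqueness then follows because any real form of $\widehat G$ containing the given $G_0$ and stabilizing the prescribed open orbit is determined by its Lie algebra, which is pinned down by the requirement that it contain $\mathfrak g_0$ and have the correct complexification.

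Next, in each case of the list I would exhibit the real form concretely. In case (1), $\widehat G=\mathrm{SL}_{2n}(\mathbb C)$, $G=\mathrm{Sp}_{2n}(\mathbb C)$, $Z=\mathbb P^{2n-1}$; the flag domains for a real form $G_0$ of $\mathrm{Sp}_{2n}(\mathbb C)$ sit inside $\mathbb P^{2n-1}$, and I would identify which real form $\widehat G_0$ of $\mathrm{SL}_{2n}(\mathbb C)$ (an $\mathrm{SU}(p,q)$ or $\mathrm{SL}_{2n}(\mathbb R)$ or $\mathrm{SL}_n(\mathbb H)$) stabilizes the corresponding open orbit and contains $G_0$. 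In case (2), $\widehat G=\mathrm{SO}_7(\mathbb C)$ with $G=G_2$, and in case (3), $\widehat G=\mathrm{SO}_{2n}(\mathbb C)$ with $G=\mathrm{SO}_{2n-1}(\mathbb C)$; in each case the matching is dictated by compatibility of the real structures. The key compatibility is that the Cartan involution of $G_0$ must extend to $\widehat G$, and the extension is forced by the requirement that the base cycle $C_0=K_0.z_0$ and its $K$-orbit geometry be respected, so that $\widehat K_0$ is the maximal compact of the sought real form.

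The main obstacle I expect is verifying in cases (2) and (3) that the candidate real form $\widehat G_0$ genuinely stabilizes $D$ rather than merely containing $G_0$ abstractly: one must check that the larger group does not move $D$ to a different open orbit. The clean way to handle this is to use that $\mathrm{Aut}(D)^0$ already \emph{equals} the full $\widehat G$-stabilizer of $D$ (by the Corollary, every holomorphic vector field on $D$ extends to $Z$), so stabilization is automatic and what remains is the purely Lie-theoretic identification of that stabilizer's Lie algebra as a real form. Thus the genuine work is a finite, case-by-case computation of real forms compatible with the given $G_0$ and the embedding $\mathfrak g\subset\widehat{\mathfrak g}$, which is tractable precisely because Onishchik's list is short and each embedding is explicit.
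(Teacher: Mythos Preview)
Your overall plan---reduce to the three cases in Onishchik's list and, in each, exhibit the real form $\widehat G_0\supset G_0$---is exactly what the paper does. But two of your shortcuts do not work, and they bypass precisely the content of the paper's proof.

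First, the dimension argument for why $\widehat{\mathfrak g}_0:=\mathrm{Lie}(\mathrm{Aut}(D)^0)$ is a real form is wrong as stated: $\mathrm{Aut}(D)^0$ is the stabilizer of the \emph{set} $D$, not of a point, so there is no formula $\dim_{\mathbb R}\widehat G-\dim_{\mathbb R}D$ for its dimension. A correct abstract argument does exist (use that $\widehat{\mathfrak g}_0+i\widehat{\mathfrak g}_0$ is a complex subalgebra of the simple $\widehat{\mathfrak g}$ containing the maximal $\mathfrak g$, and that $\widehat{\mathfrak g}_0\cap i\widehat{\mathfrak g}_0$ is a complex ideal), but it only tells you that $\mathrm{Aut}(D)^0$ is \emph{either} $G_0$ \emph{or} a real form of $\widehat G$. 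It does not tell you which.

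Second, and this is the real gap, your ``clean way'' misreads the Corollary. The Corollary says that every holomorphic vector field on $D$ extends to $Z$, so $\mathrm{Aut}(D)^0$ embeds in $\widehat G$. It says nothing about $\mathrm{Aut}(D)^0$ being larger than $G_0$. Showing that a \emph{candidate} $\widehat G_0$ stabilizes $D$ is not ``automatic''---it is exactly the statement to be proved, and it is what the paper carries out by hand: in the projective-space case by a constructive linear-algebra argument producing a symplectic $h$-isometry mapping any positive line to any other; in the $G_2\subset \mathrm{SO}_7$ case by a Matsuki-duality analysis showing $K$ acts transitively on each of the four $\widehat K$-orbits; in the $\mathrm{SO}_{2n-1}\subset \mathrm{SO}_{2n}$ case by an inductive normal-form argument on isotropic $n$-planes. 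Your proposal treats these orbit-coincidence computations as dispensable, but they are the substance of the theorem; the ``purely Lie-theoretic identification of the stabilizer's Lie algebra'' you describe cannot, on its own, rule out the possibility $\mathrm{Aut}(D)^0=G_0$.
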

\noindent
Our proof of this fact amounts to a concrete discussion for each of
the three classes of exceptions in Onishchik's list which was
given in $\S\ref{intro}$. Below we show that these cases not only occur 
but also occur at the level of real forms.  This is the content of (3) in 
Theorem \ref{main theorem} and Theorem \ref{exceptional} above.  
 
\subsection{Projective space}\label{projective space}
\noindent
Here we consider the case where $Z=\mathbb {P}(V)$ is the projective
space of an even-dimensional complex vector space $V=\mathbb {C}^{2n}$.
Define the complex bilinear form $b$ by $b(z,w)=z^tw$.  In the standard basis
$(e_1,\ldots ,e_{2n})$ define $J:V\to V$ by
$J(e_i)=e_{n+1}$, $i\le n$, and $J(e_i)=-e_{i-n}$, $i>n$.  Note $J$ is $b$-orthogonal
with $J^2=-\mathrm{Id}$ and define a
(complex, bilinear)  symplectic form by $\omega (z,w)=z^tJw$.  
Define $V_+:=\mathrm{Span}\{e_1,\ldots, e_n\}$ and $V_-=\mathrm{Span}\{e_{n+1},\ldots ,e_{2n}\}$
and correspondingly $E:=+\mathrm{Id}\oplus -\mathrm{Id}$. If
$C:V\to V$ denotes the standard complex conjugation given by
$z\mapsto \bar{z}$,  define a non-degenerate (mixed-signature) Hermitian structure 
on V by $h(z,w)=z^tEC(w)$.  Finally, if the antilinear map $\varphi :V\to V$ is 
defined by $z\mapsto -JECw$,  it follows that
$h(z,w)=\omega (z,\varphi(w))$ and, since $\varphi^2=-\mathrm{Id}$, 
that $\varphi $ is an $h$-isometry. Observe that if $P$ is 
a $\varphi $-invariant subspace of $V$, then $P^{\perp_h}=P^{\perp_\omega}$.
In particular, $P$ is symplectic if and only if it is $h$-nondegenerate and in either
of these cases $V=P\oplus P^\perp_h$ is a decomposition of $V$ into $h$-nondegenerate,
symplectic subspaces.

\bigskip\noindent
The complex symplectic group $G=\mathrm{Sp}_{2n}(\mathbb {C})$ defined by
$\omega $ has two types
of real forms.  The first case to be considered is where 
$\widehat{G}_0$
is the real form $\mathrm{SU}(n,n)$ of $\widehat{G}=\mathrm{SL}_{2n}(\mathbb {C})$
which is defined as the group of $h$-isometries.  In this case the real form 
$G_0=\mathrm{Sp}_{2n}(\mathbb {R})$ of $G=\mathrm{Sp}_{2n}(\mathbb {C})$ 
is defined as the intersection
$\widehat {G}_0\cap \mathrm {Sp}_{2n}(\mathbb {C})$.
Considering the orbits of these groups on $\mathbb {P}(V)$ we let
$D_+$ (resp. $D_-$) be the open sets in $\mathbb {P}(V)$ of $h$-positive (reps. $h$-negative)
lines.
\begin {prop}\label{Sp_2n(R)}
The open sets $D_+$ and $D_-$ are both orbits of $G_0$ and $\widehat {G}_0$.
\end {prop}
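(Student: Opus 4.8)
The plan is to analyze the orbit structure of $\widehat{G}_0=\mathrm{SU}(n,n)$ and $G_0=\mathrm{Sp}_{2n}(\mathbb{R})$ on $\mathbb{P}(V)$ and show that the sets $D_+$ and $D_-$ of $h$-positive and $h$-negative lines are single orbits of \emph{both} groups. Since $G_0\subset\widehat{G}_0$, every $G_0$-orbit is contained in a $\widehat{G}_0$-orbit, so the crux is twofold: first, that each of $D_\pm$ is a single $\widehat{G}_0$-orbit, and second — the real content — that each remains a single orbit under the smaller group $G_0$.

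\textbf{Transitivity of $\widehat{G}_0$.} First I would observe that $D_+$ and $D_-$ are open and invariant because $h$ is nondegenerate of signature $(n,n)$ and the sign of $h$ on a line is preserved by $h$-isometries. Transitivity of $\mathrm{SU}(n,n)$ on $h$-positive (resp. $h$-negative) lines is the standard Witt-type statement: given two lines $\mathbb{C}v$, $\mathbb{C}w$ on which $h$ is positive, after rescaling one may arrange $h(v,v)=h(w,w)=1$, and Witt's theorem for the Hermitian form $h$ produces an element of $\mathrm{U}(n,n)$ carrying $v$ to $w$; a diagonal adjustment puts it in the special unitary group $\mathrm{SU}(n,n)$, which acts the same way on $\mathbb{P}(V)$. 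The argument for $D_-$ is identical.

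\textbf{Transitivity of $G_0$ — the main obstacle.} The real work is to show that $\mathrm{Sp}_{2n}(\mathbb{R})=\widehat{G}_0\cap\mathrm{Sp}_{2n}(\mathbb{C})$ is already transitive on $D_+$. Here I would exploit the compatibility of the three structures set up in the paper: the relation $h(z,w)=\omega(z,\varphi(w))$ with $\varphi^2=-\mathrm{Id}$, together with the observation that a $\varphi$-invariant line is $h$-nondegenerate precisely when it is $\omega$-nondegenerate, i.e. symplectic. Given an $h$-positive line $\ell=\mathbb{C}v$, the plane $P=\mathrm{Span}\{v,\varphi(v)\}$ is $\varphi$-invariant and $h$-nondegenerate, hence symplectic, and one gets an orthogonal-symplectic splitting $V=P\oplus P^{\perp}$ into $\varphi$-invariant symplectic subspaces. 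This reduces the problem to producing an element of the \emph{real} symplectic group moving one such $h$-positive line to another, which can be done by an explicit Witt-style induction on the symplectic splitting, using that on each $\varphi$-invariant symplectic $2$-plane the restricted $h$ and $\omega$ determine a standard model on which $\mathrm{Sp}_2(\mathbb{R})=\mathrm{SL}_2(\mathbb{R})$ acts transitively on the positive lines. I expect the bookkeeping of signatures across the splitting — ensuring the constructed map lies in $\mathrm{Sp}_{2n}(\mathbb{R})$ and not merely in $\mathrm{Sp}_{2n}(\mathbb{C})\cap\mathrm{U}(n,n)$ up to a phase — to be the delicate point, and the cleanest route is to package it as a single appeal to Witt's theorem for the pair $(\omega,\varphi)$ viewed as a real symplectic form on $V$ regarded as a real vector space of dimension $4n$ with compatible positive structure $h$.

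\textbf{Conclusion.} Once transitivity of both groups on each of $D_+$ and $D_-$ is established, the proposition follows immediately. I would close by noting that since $G_0$ and $\widehat{G}_0$ have the same orbits here, each $D_\pm$ is a $G_0$-flag domain that is simultaneously a $\widehat{G}_0$-flag domain, which is exactly the input needed later to identify $\mathrm{Aut}(D_\pm)^0$ with the real form $\widehat{G}_0=\mathrm{SU}(n,n)$ in the exceptional case of projective space.
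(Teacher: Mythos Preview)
Your proposal is correct and follows essentially the same approach as the paper: both use the $\varphi$-invariant planes $P=\mathrm{Span}\{v,\varphi(v)\}$ and the resulting simultaneous $h$- and $\omega$-orthogonal decomposition of $V$ into such planes to build inductively an element of $G_0$ carrying one positive line to another. The paper is slightly more economical in that it skips the separate verification of $\widehat{G}_0$-transitivity (which follows automatically from $G_0\subset\widehat{G}_0$) and constructs the transformation $T$ directly by matching adapted bases $(z_i,\varphi(z_i))\mapsto(\widetilde z_i,\varphi(\widetilde z_i))$ on each $2$-plane rather than invoking Witt's theorem abstractly.
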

\begin {proof}
It is clear that $D_+$ and $D_-$ are $G_0$- and $\widehat {G}_0$-invariant and
that $D_+\cup D_-$ is dense in $Z$.
Since $G_0\subset \widehat {G}_0$, it is therefore enough to show that $G_0$
acts transitively on both sets.  The proof for $D_+$ is exactly the same as
for $D_-$ and therefore we only give it for $D_+$.  For this, given positive lines
$L=\mathbb {C}.z$ and $\widetilde{L}=\mathbb {C}.\widetilde {z}$, we define 
$P=\mathrm{Span} \{z,\varphi(z)\}$ and 
$\widetilde{P}=\mathrm{Span}\{\widetilde{z},\varphi(\widetilde{z})\}$.
These planes are $h$-nondegenerate and symplectic.  We normalize $z$ and $\widetilde{z}$
so that $\Vert z\Vert^2_h=\Vert \widetilde{z}\Vert^2=1$ and, since $\varphi :E_+\to E_-$,
$\Vert \varphi(z)\Vert^2=\Vert \varphi(\widetilde{z})\Vert^2=-1$.  Applying this procedure
to $P^\perp$ and $\widetilde {P}^\perp$ we have $h$- and $\omega $-orthogonal decompositions
$$
V=P_1\oplus \ldots \oplus P_n=\widetilde{P}_1\oplus \ldots \oplus \widetilde{P}_n
$$
of $V$.   Futhermore, every $P_i$ (resp. $\widetilde{P}_i$) comes equipped with
a basis $(z_i,\varphi(z_i))$ (resp. $(\widetilde{z}_i,\varphi(\widetilde{z}_i))$ such that
the mapping $T_i:P_i\to \widetilde{P}_i$ defined by $z_i\to \widetilde{z}_i$ and 
$\varphi(z_i)\mapsto \varphi(\widetilde {z}_i)$ is both symplectic and an 
$h$-isometry.  It follows that $T=T_1\oplus \ldots \oplus T_n$ is both a symplectic
isomorphism and $h$-isometry of $V$, i.e., $T\in G_0$.  Since $T(L)=\widetilde{L}$,
the proof is complete.
\end {proof}  
\noindent
Now let us turn to the real form $G_0=\mathrm{Sp}(2p,2q)$ of 
$G=\mathrm{Sp}_{2n}(\mathbb {C})$.  In this case we line up $J$ and
$E$ in a different way. The decomposition $V:=V_+\oplus V_-$ and $J$ are
the same, but now $h$ has signature $(p,q)$ on both spaces, being defined 
by the block diagonal matrix $E_{p,q}=(\mathrm{Id}_p,-\mathrm{Id}_q)$.
Then $\widehat{G}_0=\mathrm{SU}(2p,2q)$ is
defined as above by the Hermitian form $h$ and
$G_0=G\cap \widehat{G}_0$.  The proof of the following
fact is exactly the same as that of Proposition \ref{Sp_2n(R)} above.

\bigskip\noindent
\textbf{Zusatz.} Proposition \ref{Sp_2n(R)} also holds for $G_0=\mathrm{Sp}(2p,2q)$
and $\widehat{G}_0=\mathrm {SU}(2p,2q)$.\qed
\subsection{$5$-dimensional quadric}
Here we consider $V=\mathbb {C}^7$ equipped with the complex
bilinear form $b$ defined by $\Vert z\Vert^2_b=(z_1^2+z_2^2+z_3^2)-(z_4^2+\ldots +z_7^2)$
and Hermitian form $h$ defined by  
$\Vert z\Vert^2_h=(\vert z_1\vert^2+\vert z_2\vert^2+\vert z_3^2\vert^2)
-(\vert z_3\vert^2+\ldots +\vert z_7\vert^2)$\,. 
Denote by $\widehat {G}=\mathrm{SO_7}(\mathbb {C})$ the associated
complex orthogonal group and by $\widehat{G}_0:=\mathrm{SO}(3,4)$ the
associated group of Hermitian isometries.   

\bigskip\noindent
We regard the exceptional complex Lie group $G=G_2$ as being
embedded in $\widehat{G}$ as the automorphism group $\mathrm{Aut}(\mathbb {O})$
of the octonians.  It has a unique noncompact real form 
$G_0=\mathrm{Aut}(\widetilde {\mathbb {O}})$, the automorphism group of the split
octonians $\widetilde {\mathbb{O}}$.  In this way $G_0$ is the intersection 
$G\cap \widehat{G}_0$ of $G$ with the  real from $\widehat{G}_0=\mathrm{SO}(3,4)$ 
(see,e.g., [Ha] for details).   Note that $\widehat{G}_0$ is 
invariant by the standard complex conjugation $z\mapsto \bar{z}$.

\bigskip\noindent
The remainder of this paragraph is devoted to the proof of
the following fact.
\begin {prop}\label{equality}
For every $z\in Z$ it follows that $G_0.z=\widehat{G}_0.z$.  In particular the 
open orbits of $G_0$ and $\widehat{G}_0$ coincide.
\end {prop}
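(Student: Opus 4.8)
The plan is to exploit the inclusion $G_0\subseteq\widehat{G}_0$ in order to reduce everything to a single transitivity statement. Since $G_0=G\cap\widehat{G}_0\subseteq\widehat{G}_0$, we automatically have $G_0.z\subseteq\widehat{G}_0.z$ for every $z\in Z$, so it suffices to prove the reverse inclusion, i.e.\ that $G_0$ already acts transitively on each $\widehat{G}_0$-orbit. The essential structural observation is that both $G_0$ and $\widehat{G}_0$ consist of \emph{real} matrices: an element of $\mathrm{SO}_7(\mathbb{C})$ is an $h$-isometry precisely when it commutes with the conjugation $z\mapsto\bar z$, so $\widehat{G}_0=\mathrm{SO}(3,4)$ is the real orthogonal group of $b|_{\mathbb{R}^7}$ and $G_0=\mathrm{Aut}(\widetilde{\mathbb{O}})$ is its subgroup of octonion automorphisms. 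Thus both groups act $\mathbb{C}$-linearly while preserving the underlying real data of a point, which is what makes the comparison tractable.

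First I would parametrize $Z$. Writing a representative as $z=x+iy$ with $x,y\in\mathbb{R}^7$, the condition $b(z,z)=0$ is equivalent to $b(x,x)=b(y,y)$ together with $b(x,y)=0$, and since $h(z,z)=b(z,\bar z)=b(x,x)+b(y,y)=2\,b(x,x)$ its sign is a well-defined invariant of the line $[z]$. Thus each point of $Z$ records the real $2$-plane $\Pi=\mathrm{span}_{\mathbb{R}}\{x,y\}$ (a real null line when $x$ and $y$ are dependent) together with the conformal class of $b|_\Pi$. By Witt's theorem the $\widehat{G}_0=\mathrm{SO}(3,4)$-orbits are then governed by $\mathrm{sgn}\,b(x,x)$ and the degeneracy of $\Pi$: this produces the two open orbits $h(z,z)\gtrless 0$ and the lower-dimensional orbit(s) along which $h(z,z)=0$.

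The heart of the matter is to realize the \emph{same} transitivity for $G_0$. For this I would invoke the classical transitivity properties of the split automorphism group $\mathrm{Aut}(\widetilde{\mathbb{O}})$ on the split octonions (cf.\ [Ha]): $G_0$ is transitive on the imaginary octonions of any fixed nonzero norm $b(x,x)=c$, and the isotropy of such an $x$ is in turn transitive on the $b$-orthogonal imaginary octonions $y$ of matching norm --- equivalently, $G_0$ acts transitively on appropriately marked (split-)quaternion subalgebras. Applied to two points $[z]=[x+iy]$ and $[z']=[x'+iy']$ lying in a common $\widehat{G}_0$-orbit, and after normalizing representatives by the $\mathbb{C}^{*}$-scaling, this yields a $g\in G_0$ carrying the pair $(x,y)$ to $(x',y')$, hence $[z]$ to $[z']$. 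Thus $G_0$ is transitive on each $\widehat{G}_0$-orbit and $G_0.z=\widehat{G}_0.z$ for all $z$; in particular the open orbits of the two groups coincide.

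The hard part will be the third step, namely pinning down the split-form octonion transitivity with enough precision. Two points deserve care: first, one must check that no discrete invariant distinguishes $G_0$-orbits inside a single $\widehat{G}_0$-orbit, so that the larger group $\widehat{G}_0$ (of dimension $21$ against $14$) does not reach strictly more than $G_0$; and second, the degenerate case $h(z,z)=0$, where $\Pi$ is totally $b$-isotropic or $x,y$ are dependent, must be handled separately, since there the pair $(x,y)$ no longer spans a nondegenerate subalgebra and the subalgebra argument must be replaced by a direct analysis on the null cone. A final routine verification that $G_0$ meets every connected component of the relevant $\widehat{G}_0$-orbits then completes the argument.
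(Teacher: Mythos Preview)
Your outline is sound and would lead to a correct proof, but it takes a genuinely different route from the paper. You work directly on the real side: write $z=x+iy$ with $x,y\in\mathbb{R}^7$, use Witt's theorem to classify the $\widehat{G}_0=\mathrm{SO}(3,4)$-orbits on $Z$ by the sign of $b(x,x)$ and the degeneracy of $\Pi=\mathrm{span}_{\mathbb{R}}\{x,y\}$, and then invoke the classical transitivity of $G_0=\mathrm{Aut}(\widetilde{\mathbb{O}})$ on orthogonal pairs of imaginary split octonions of prescribed norms. The paper instead passes to the \emph{dual} picture via Matsuki duality: it suffices to show that the complexified maximal compact $K$ of $G_0$ acts transitively on each orbit of the complexified maximal compact $\widehat{K}$ of $\widehat{G}_0$. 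There are exactly four $\widehat{K}$-orbits --- the base cycles $C_\pm$, an intermediate orbit which is a $\mathbb{C}^*$-bundle over $C_-$, and the open orbit containing $Z_{\mathbb{R}}$ --- and the paper verifies $K$-transitivity on each one using the explicit diagonal embedding of $K_0\cong(\mathrm{SU}_2\times\mathrm{SU}_2)/\pm$ in $\widehat{K}_0=S(\mathrm{O}(3)\times\mathrm{O}(4))$ together with a homogeneous-fibration argument over affine quadrics $Q_{(3)}$ and $Q_{(2)}$.

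What each approach buys: yours is conceptually direct and avoids the Matsuki machinery entirely, but the price is exactly what you flag as ``the hard part.'' The locus $h(z,z)=0$ genuinely splits into two $\widehat{G}_0$-orbits --- the real points $Z_{\mathbb{R}}$ (where $x,y$ are $\mathbb{R}$-dependent) and its complement (where $\Pi$ is a totally $b$-isotropic $2$-plane) --- and for the latter you need $G_0$-transitivity on null $2$-flags in $\mathrm{Im}\,\widetilde{\mathbb{O}}$, which is true but requires real work with the octonion multiplication (it does not follow from Witt alone, since $G_0$ is much smaller than $\widehat{G}_0$). The paper's route trades these algebraic verifications for the Matsuki formalism; once that is accepted, all four cases are dispatched by elementary dimension and fibration reasoning with essentially no octonion computations. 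Your caution about discrete invariants and connected components is well placed and must be checked in either approach; the paper absorbs this into its explicit enumeration of the four $\widehat{K}$-orbits.
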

\noindent
We should note that, as indicated below, the open orbits of $\widehat{G}_0$ are the
spaces $D_+$ and $D_-$ of positive and negative lines, respectively.

\bigskip\noindent
For the proof of Proposition \ref{equality} 
we use Matsuki duality (see, e.g., Chapter 8 in Part II of \cite{FHW})
which states that there is a $1-1$ correspondence between the $G_0$-orbits 
and $K$-orbits in $Z$.  This can be given as follows:  For every $G_0$-orbit 
there is a unique $K$-orbit which intersects it in the unique $K_0$-orbit of minimal
dimension and vice versa, i.e., given a $K$-orbit there is a unique $G_0$-orbit
which intersects it in the unique $K_0$-orbit of minimal dimension.  Due to our
interest in the open $G_0$-orbits (resp. $\widehat{G}_0$-orbits) in $Z$, we have 
stated the above result on that side of the duality.  However, we have found
it more convenient to prove the corresponding dual statement.

\bigskip\noindent
Let us fix the maximal compact subgroup 
$K_0\cong (\mathrm{SU}_2\times \mathrm{SU}_2)/(-\mathrm{Id},-\mathrm{Id})$ of $G_0$ 
being diagonally embedded in the maximal
compact subgroup $\widehat{K}_0=S(\mathrm{O}(3)\times \mathrm{O}(4))$ of $\widehat{G}_0$. 
If $E_+:=\mathrm{Span}\{e_1,e_2,e_3\}$ and $E_-:=\mathrm{Span}\{e_4,\ldots ,e_7\}$,
Then we define $z_+:=e_1+ie_2$, $z_-:=e_4+ie_5$ and observe that the
base cycles $C_+$ and $C_-$ for the open orbits of the $\widehat{G}_0$-action are
the quadrics of $b$-isotropic lines in $E_+$ and $E_-$, respectively.  The corresponding
open orbits are the spaces $D_+=\widehat{G}_0.z_+$ of positive lines in $Z$ and
$D_-=\widehat{G}_0.z_-$ of negative lines, respectively.  The complement of
$D_+\cup D_-$, which is the space of lines which are both $b$- and $h$-isotropic,
consists of two $\widehat{G}_0$-orbits, the real points $Z_{\mathbb {R}}$ and its complement.

\bigskip\noindent
The $\widehat{K}$-orbits which correspond via Matsuki duality to the four $\widehat{G}_0$-orbits
are the two base cycles $C_+$ and $C_-$, the open $\hat{K}$-orbit of any point
on $Z_{\mathbb R}$ and a forth orbit $\mathcal {O}$ which has two ends, i.e., which has the
two base cycles on its boundary.  In fact this forth orbit is a $\mathbb {C}^*$-principal
bundle over the $2$-dimensional cycle $C_-$ (See \cite{FHW}, $\S16.4$ for a detailed
discussion in the case of the $K3$-period domain which can be transferred verbatim to
the case at hand.).  To prove the above Proposition \ref{equality}
we show that $K$ acts transitively on each of these four $\widehat{K}$-orbits.

\bigskip\noindent
Now the second factor of $\widehat{K}$ acts trivially on $C_+$ and the first factor
acts trivially on $C_-$ and vice versa.   Since $K$ is diagonally embedded in $\widehat{K}$ and
projects onto both factors, it is immediate that it acts transitively on both
$C_+$ and $C_-$ as well.  Since $\mathcal {O}$ is a $\mathbb {C}^*$-bundle
over $C_-$, $K$ acts transitively on the base of this bundle and has an
open orbit in the bundle space $\mathcal {O}$, it is immediate that it acts transitively
on $\mathcal {O}$.  

\bigskip\noindent
It remains to show that $K$ acts transitively on the open $\hat{K}$-orbit.  
For this we first note that, since $e_3+e_4\in Z_{\mathbb {R}}$ and the connected
component at the identity of $\hat{K}_0$ is the product of the special orthogonal
groups of $E_+$ and $E_-$, it follows that up to finite group quotients 
$Z_{\mathbb {R}}$ is the corresponding product $S^2\times S^3$ of spheres.
One immediately observes that $G_0$ acts transitively on $Z_{\mathbb {R}}$, because
every $G_0$-orbit is at least half-dimensional over $\mathbb {R}$. Thus $K_0$ acts
transitively on $Z_{\mathbb {R}}$ and if $z_{\mathbb {R}}$ is an arbitrary point
of $Z_{\mathbb {R}}$, it follows that $K.z_{\mathbb {R}}$ is open in $\hat{K}.z_{\mathbb {R}}$.

\bigskip\noindent
To complete the proof of Proposition \ref{equality} we must show that 
$K.z_{\mathbb {R}}=\widehat{K}.z_{\mathbb {R}}$.  For this we let $\widehat{K}_1$ be the first
factor of the product decomposition of the connected component of $\widehat{K}$
and consider the homogeneous fibration
$$
\widehat{K}.z_{\mathbb {R}}=\widehat{K}/\widehat{L}\to \widehat{K}/\widehat{K}_1\widehat{L}
=\widehat{K}_2/\widehat{L}_2=B\,,
$$
Since $Z_{\mathbb {R}}$ is essentially a product $S^2\times S^3$ corresponding
to the decomposition of the connected component $\widehat{K}_0$, it follows that up to 
finite group quotients the base $B$ is the complexification of $S^3$,
i.e., the affine quadric $Q_{(3)}=\mathrm {SO}_4(\mathbb {C})/\mathrm{SO}_3(\mathbb {C})$.
Since $K$ projects surjectively onto both factors of $\widehat{K}$, it is immediate that
$K$ acts transitively on $B=K/M$.  Now the induced fibration of $K.z_{\mathbb {R}}$ 
is a homogeneous bundle $K/L\to K/M$ where the fiber $M/L$ is an open 
$M$-orbit in the corresponding fiber $\widehat{F}$ of the $\widehat{K}$-bundle 
$\widehat{K}/\widehat{L}\to \widehat{K}/\widehat{M}$.  But $\widehat{M}$ acts on this fiber as 
$\mathrm{SO}_3(\mathbb {C})$ so that $\hat{F}$ is the affine quadric $Q_{(2)}$.
Since $K/M$ is affine, $M$ is reductive.  But the only reductive subgroup
of $\mathrm{SO}_3(\mathbb {C})$ with an open orbit in $Q_{(2)}$ is $\mathrm{SO}_3(\mathbb {C})$
itself.  Consequently $K$ does indeed act transitively on the open $\widehat{K}$-orbit and
the proof of Proposition \ref{equality} is complete.
 \qed

\subsection{Space of isotropic $n$-planes in $\mathbb {C}^{2n}$}
Now let $\widehat{V}=\mathbb {C}^{2n}$ be equipped with its standard basis
$(e_1,\ldots, e_{2n})$ and complex bilinear form defined by $b(z,w)=z^tw$.
The complex orthogonal group $\mathrm {SO}_{2n}(\mathbb {C})$ of $b$-isometries 
is denoted by $\widehat{G}$.  We let 
$G:=\mathrm{Fix}_{\widehat{G}}(e_{2n})$.  In this way 
$G\cong \mathrm{SO}_{2n-1}(\mathbb {C})$ is the orthogonal group of the
the restriction of $b$ to $V:=\mathrm{Span}\{e_1,\ldots ,e_{2n-1}\}$.  
We consider the action of these groups 
on the flag manifold $Z$ of $n$-dimensional $b$-isotropic subspaces of $\hat{V}$.  
\begin {prop}
The groups $G$  and $\widehat{G}$ act transitively on $Z$.
\end {prop}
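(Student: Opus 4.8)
The plan is to reduce the transitivity of $G$ to the classical transitivity of an orthogonal group on a single isotropic Grassmannian, using the anisotropic vector $e_{2n}$ to set up a $G$-equivariant bijection. Transitivity of $\widehat{G}=\mathrm{SO}_{2n}(\mathbb{C})$ is the easy half: by construction $Z$ is one of the two connected rulings of the variety of maximal $b$-isotropic $n$-planes, and $\mathrm{SO}_{2n}(\mathbb{C})$ is well known to act transitively on each ruling (equivalently $Z=\widehat{G}/\widehat{P}$ is a flag manifold). So the whole content is the transitivity of the proper subgroup $G=\mathrm{Fix}_{\widehat{G}}(e_{2n})\cong\mathrm{SO}_{2n-1}(\mathbb{C})$.

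First I would introduce $V=e_{2n}^{\perp}=\mathrm{Span}\{e_1,\dots,e_{2n-1}\}$, on which $b$ restricts nondegenerately, and let $Z'$ denote the variety of maximal (that is, $(n-1)$-dimensional) $b$-isotropic subspaces of $V$. Since $G$ fixes $e_{2n}$ it preserves $V$ and acts on $Z'$ as the full group $\mathrm{SO}_{2n-1}(\mathbb{C})$. The map I would use is
\[
\Phi\colon Z\to Z',\qquad \Phi(W)=W\cap V .
\]
That this is well defined is a dimension count: for $W\in Z$ one has $\dim(W\cap V)\ge n+(2n-1)-2n=n-1$, while $W\cap V$ is $b$-isotropic in $V$ and so has dimension at most $n-1$; the value $n$ is impossible since a maximal isotropic subspace of $V$ has dimension $n-1<n$. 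Hence $W\cap V\in Z'$, and $\Phi$ is manifestly $G$-equivariant.

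The crux is that $\Phi$ restricts to a bijection $Z\to Z'$. Given $U\in Z'$, the fibre $\Phi^{-1}(U)$ consists exactly of the maximal isotropic $n$-planes $W$ of $\widehat{V}$ containing $U$, since $\dim U=n-1$ forces $W\cap V=U$ once $U\subset W$. Such $W$ correspond to the isotropic lines of the quotient $U^{\perp}/U$, which is a nondegenerate quadratic space of dimension $2n-2(n-1)=2$, i.e.\ a hyperbolic plane; over $\mathbb{C}$ it has exactly two isotropic lines, giving exactly two maximal isotropics $W_1,W_2\supset U$. Since $W_1\cap W_2=U$ has dimension $n-1\not\equiv n\pmod 2$, the standard parity criterion places $W_1$ and $W_2$ in the two different rulings; hence precisely one of them lies in $Z$. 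Therefore $\Phi|_Z$ is a bijection onto $Z'$.

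Finally, $G\cong\mathrm{SO}_{2n-1}(\mathbb{C})$ acts transitively on $Z'$ by Witt's extension theorem, the maximal isotropics of an odd-dimensional nondegenerate orthogonal space forming a single connected homogeneous space. Given $W,W'\in Z$ I would choose $g\in G$ with $g\,\Phi(W)=\Phi(W')$; then $\Phi(gW)=g\,\Phi(W)=\Phi(W')$ by equivariance, and injectivity of $\Phi|_Z$ (note $g\in\widehat{G}$ preserves each ruling, so $gW\in Z$) gives $gW=W'$. This proves that $G$ acts transitively. The main obstacle is the bijectivity of $\Phi$, and within it the assertion that the two maximal isotropics over a given $U$ lie in different rulings; this is exactly the point where the parity invariant of the two families, rather than any dimension count, must be invoked.
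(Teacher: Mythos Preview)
Your proof is correct. Both your argument and the paper's begin with the same key observation, namely that for any $\widehat{W}\in Z$ the intersection $W=\widehat{W}\cap V$ is an isotropic $(n-1)$-plane in $V$. From there the two proofs diverge. The paper proceeds by an explicit inductive normal-form argument: it writes $\widehat{W}=W\oplus\mathbb{C}(v+ie_{2n})$, uses an element of $G$ to move $v$ to $e_{2n-1}$ (so that $W$ lands in $\mathrm{Span}\{e_1,\ldots,e_{2n-2}\}$), and then invokes the induction hypothesis for the smaller orthogonal group to bring $W$ to a fixed basis. You instead package the intersection as a $G$-equivariant map $\Phi:Z\to Z'$, prove it is a bijection via the parity criterion for the two rulings, and then appeal once to Witt's theorem for the transitivity of $\mathrm{SO}_{2n-1}(\mathbb{C})$ on $Z'$. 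Your route is a bit more structural and has the virtue of making the role of the two connected components of the full isotropic Grassmannian explicit---a point the paper's proof passes over silently---at the cost of quoting Witt as a black box, whereas the paper essentially re-derives the needed transitivity by hand.
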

\begin {proof}
Note that the intersection $W:=\widehat{W}\cap V$ of an isotropic $n$-plane in $\widehat{V}$
is an isotropic $(n-1)$-plane in $V$. It follows that $\widehat{W}=W\oplus \mathbb {C}.(v+ie_{2n})$
for some $v\in V$.  Applying an appropriate element of $G$, we may assume that
$v=e_{2n-1}$ and it then follows that $W\subset\mathrm{Span}\{e_1,\ldots, e_{2n-2}\}$.  We then
apply the induction assumption to obtain a transformation in the corresponding
$\mathrm {SO}_{2n-2}(\mathbb {C})$ to bring $W$ to the normal form with basis
$(e_1+ie_{n+1},\ldots ,e_{n-1}+ie_{2n-2})$ so that altogether we have found a transformation
in $G$ which brings $W$ to the normal form with the basis
$(e_1+ie_{n+1},\ldots ,e_{2n-1}+ie_{2n})$.
\end {proof}
\noindent
Recall that up to conjugation the only real forms of $\mathrm{SO}_{2n-1}(\mathbb {C})$ are the
isometry groups $G_0=\mathrm{SO}(p,q)$ for the mixed signature Hermitian form
defined by $h(z,w)=z^tEC(w)$ on $V$ where $E=E_{p,q}$ is defined in the same
way as in $\S\ref{projective space}$. Without loss of generality we may choose $h$ to
be this form and note that an appropriately chosen arbitrarily small
perturbation of an isotropic $n$-plane $\widehat {W}$ will result in the intersection
$W=\widehat{W}\cap V$ being $h$-nondegenerate.  Thus, if
$G_0.z=:D$ is an open orbit in $Z$, the $(n-1)$-plane $W$ associated to $z$ 
is $h$-nondegenerate. 

\bigskip\noindent
Note that if $p$ is even, then $q$ is odd and vice versa.  To make the notation more
explicit, we assume that $p$ is even. Now the space of $h$-positive
$b$-isotropic lines in $V$ is an open $G_0$-orbit. Thus, given $W$ as above, we may 
apply an element $g\in G_0$ so that after replacing $W$ by $g(W)$ we have 
$L=\mathbb{C}(e_1+ie_2)\subset W$.  Notice that subspace of $V$ of vectors which are
both  $h$- and $b$-orthogonal to $L$ is simply $\mathrm{Span}\{e_3,\ldots ,e_{2n-1}\}$.  Thus, after going
to this smaller space, we have the same situation as before.  Hence we may continue on
by induction to obtain a maximal $h$-positive subspace $W_+$ of $W$ which is 
$\frac{p}{2}$-dimensional and which has a distinguished basis produced by our procedure.  
Applying the same argument as above
to the $h$-complement $W_+^\perp$ in $W$, one obtains an element $g\in G_0$ so that
$W_0:=g(W)$ has the distinguished basis 
$$
(e_1+ie_2,e_3+ie_4,\ldots ,e_{p-1}+ie_p,e_{p+1}+ie_{p+2},\ldots e_{2n-3}+ie_{2n-2})\,.
$$
\begin {prop}\label{normalform}
If $\widehat{W}$ is a $b$-isotropic $n$-plane in $\widehat{V}$, then there exists
an element $g\in G_0$ with $g(\widehat{W})=W_0\oplus \mathbb {C}(e_{2n-1}+ie_{2n})=:\widehat{W}_0$.
\end {prop}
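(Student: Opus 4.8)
The plan is to reduce the statement to the normalization of $W:=\widehat{W}\cap V$ already carried out in the discussion preceding the proposition, and then to pin down the single remaining transversal direction. As in the proof of transitivity, $W$ is a $b$-isotropic $(n-1)$-plane, and since $\widehat{W}\not\subset V$ we may write $\widehat{W}=W\oplus \mathbb{C}(v+ie_{2n})$ for some $v\in V$ after rescaling the transversal vector so that its $e_{2n}$-component is $i$. For the planes of interest, namely those in the open $G_0$-orbit $D$, the intersection $W$ is $h$-nondegenerate by the remark above, so there is an element $g\in G_0$ with $g(W)=W_0$. Since $G_0$ fixes $e_{2n}$ and acts on $V$, we obtain $g(\widehat{W})=W_0\oplus \mathbb{C}(v'+ie_{2n})$ with $v':=g(v)\in V$, and the problem is reduced to normalizing the single vector $v'+ie_{2n}$ modulo $W_0$.

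Next I would extract $v'$ from the isotropy of $g(\widehat{W})$. For every $w\in W_0$ one has $b(w,v'+ie_{2n})=b(w,v')=0$, so $v'$ lies in the $b$-orthogonal complement of $W_0$ inside $V$. A direct computation identifies this complement: $W_0$ is a maximal isotropic subspace of the nondegenerate space $(V,b)$, of dimension $n-1=\lfloor (2n-1)/2\rfloor$, and in the standard basis it is spanned by the vectors $e_{2j-1}+ie_{2j}$, $j=1,\dots,n-1$, involving only $e_1,\dots,e_{2n-2}$; within each coordinate pair the vectors $b$-orthogonal to $e_{2j-1}+ie_{2j}$ are again multiples of it, and $e_{2n-1}$ is $b$-orthogonal to all of $W_0$, so $W_0^{\perp_b}=W_0\oplus \mathbb{C}e_{2n-1}$. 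Hence $v'=w_0+\lambda e_{2n-1}$ with $w_0\in W_0$ and $\lambda\in\mathbb{C}$, and modulo $W_0$ the transversal direction is $\lambda e_{2n-1}+ie_{2n}$. Imposing $b(v'+ie_{2n},v'+ie_{2n})=0$ yields $\lambda^2-1=0$, so $\lambda=\pm 1$ and $g(\widehat{W})$ equals either $\widehat{W}_0$ or $\widehat{W}_0^-:=W_0\oplus\mathbb{C}(-e_{2n-1}+ie_{2n})$.

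The one real point of the argument, and the expected obstacle, is to rule out the value $\lambda=-1$. This cannot be achieved by a further element of $G_0$ fixing $W_0$: any $g_1\in \mathrm{SO}(V)$ stabilizing the maximal isotropic $W_0$ acts on the anisotropic line $W_0^{\perp_b}/W_0$ by a scalar equal to $\det g_1$, which is $+1$, so no element of $G_0$ can send the class of $-e_{2n-1}$ to that of $e_{2n-1}$. Instead I would invoke the two rulings of maximal isotropics of $(\widehat{V},b)$: two such $n$-planes lie in a common $\widehat{G}=\mathrm{SO}_{2n}(\mathbb{C})$-orbit precisely when the dimension of their intersection is congruent to $n$ modulo $2$. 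Here $\widehat{W}_0\cap \widehat{W}_0^-=W_0$ has dimension $n-1\not\equiv n \pmod 2$, so $\widehat{W}_0$ and $\widehat{W}_0^-$ lie in opposite rulings. Since $Z$ is a single $\widehat{G}$-orbit and $G_0\subset \widehat{G}$ preserves each ruling, the plane $g(\widehat{W})$ lies in the same ruling as $\widehat{W}$, hence as $\widehat{W}_0\in Z$; this forces $\lambda=1$ and therefore $g(\widehat{W})=\widehat{W}_0$. The crux is thus the parity (equivalently determinant) obstruction: the naive attempt to correct the sign by an element of $G_0$ is blocked, and one must instead observe that the wrong sign simply does not occur inside the single ruling $Z$.
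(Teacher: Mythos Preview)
Your reduction to the sign ambiguity $\lambda=\pm 1$ follows the paper's line exactly (you supply the computation $W_0^{\perp_b}=W_0\oplus\mathbb{C}e_{2n-1}$ and $\lambda^2=1$ which the paper leaves implicit). The divergence is in how the sign is settled. The paper simply writes down the diagonal map $e_1\mapsto ie_1$, $e_2\mapsto ie_2$, $e_{2n-1}\mapsto -e_{2n-1}$, $e_j\mapsto e_j$ otherwise, asserts it lies in $G_0$, and uses it to flip $-e_{2n-1}+ie_{2n}$ to $e_{2n-1}+ie_{2n}$ while preserving $W_0$. You instead invoke the two rulings of maximal isotropics in $(\widehat V,b)$: since $\dim(\widehat W_0\cap\widehat W_0^{-})=n-1\not\equiv n\pmod 2$, the two candidate images lie in opposite $\mathrm{SO}_{2n}(\mathbb{C})$-orbits, and as $G_0\subset\widehat G$ preserves each ruling, only the sign compatible with the chosen component $Z$ can occur.

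Your route is not just different but more solid. The paper's diagonal map is \emph{not} a $b$-isometry (indeed $b(ie_1,ie_1)=-1$), hence not in $G=\mathrm{SO}_{2n-1}(\mathbb{C})$ and a fortiori not in $G_0$; and your determinant observation (that any $g_1\in\mathrm{SO}(V)$ stabilizing $W_0$ acts trivially on $W_0^{\perp_b}/W_0$) together with the ruling parity shows that no element of $G_0$ whatsoever can exchange $\widehat W_0$ and $\widehat W_0^{-}$. So the correct mechanism is exactly the one you give: the sign is forced by the component of the isotropic Grassmannian one is working in, not corrected after the fact. One small point worth making explicit: you are tacitly reading $Z$ as a single $\widehat G$-orbit, i.e.\ one spinor variety, and assuming $\widehat W_0\in Z$; this is the intended meaning (it is what appears in Onishchik's list and is required for $\widehat G$ to act transitively), but the paper's phrasing ``the flag manifold $Z$ of $n$-dimensional $b$-isotropic subspaces'' and the literal statement of the proposition do not say so, and without that reading the conclusion would fail for $\widehat W$ in the other ruling.
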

\begin {proof}
Let $g\in G_0$ be chosen as above with $g(W)=W_0$. It is then immediate that
$g(\widehat {W})=W_0\oplus \mathbb {C}\widehat{w}$ where $\widehat{w}=\pm e_{2n-1}+ie_{2n}$.
We obtain the positive sign by, e.g., multiplying $e_1$ and $e_2$ by $i$, $e_{2n-1}$ by
$-1$ and $e_j$ by $+1$ otherwise.  Since this transformation is also in $G_0$, the desired
result follows.
\end {proof}
\begin {theo} The Hermitian form $h$ can be naturally extended to a non-degnerate Hermitian
form $\widehat{h}$ on $\widehat{V}$ with signature $(p,q+1)$ (resp. $(p+1,q)$) if $p$ is even 
(resp.odd) so that the unique open orbit $D$ of the resulting real form $\widehat{G}_0$
is the set of isotropic $n$-planes of signature $(\frac{p}{2},\frac{q+1}{2})$ 
(resp.$(\frac{p+1}{2},\frac{q}{2})$). Furthermore, the $h$-isometry group $G_0$ in $\mathrm{SO}_{2n-1}(\mathbb {C})$
also acts transitively on $D$ which is also its unique open orbit in $Z$.
\end {theo}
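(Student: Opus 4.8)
The plan is to realize $\widehat{G}_0$ as the isometry group $\mathrm{SO}(\widehat{h})$ of a diagonal extension $\widehat{h}$ of $h$, and to control all of its orbits on $Z$ through a single antilinear involution coupling $b$ and $\widehat{h}$. Assuming $p$ even, I would extend $E=E_{p,q}$ to $\widehat{E}:=\mathrm{diag}(E_{p,q},-1)=E_{p,q+1}$ and set $\widehat{h}(z,w):=z^t\widehat{E}\,\overline{w}$; this is non-degenerate of signature $(p,q+1)$ and restricts to $h$ on $V$ with $e_{2n}\perp_{\widehat{h}}V$. Since every $g\in G_0$ fixes $e_{2n}$, preserves $V=e_{2n}^{\perp_b}$, and preserves $h$ there, it preserves $\widehat{h}$, so $G_0\subseteq\widehat{G}_0:=\mathrm{SO}_{2n}(\mathbb{C})\cap\mathrm{U}(\widehat{h})$ is immediate. (The case $p$ odd is identical after extending by $+1$.)

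The heart of the argument is the antilinear map $\tau(z):=\widehat{E}\,\overline{z}$. It satisfies $\tau^2=\mathrm{Id}$, it is a $b$-anti-isometry, and $\widehat{h}(z,w)=b(z,\tau w)$; the real form $\widehat{G}_0$ is exactly $\{g\in\widehat{G}:g\tau=\tau g\}$. First I would note that a $b$-isotropic $n$-plane $\widehat{W}$ is automatically $b$-Lagrangian ($\widehat{W}=\widehat{W}^{\perp_b}$), and then compute the radical of $\widehat{h}|_{\widehat{W}}$: a vector $v\in\widehat{W}$ lies in the radical iff $v\perp_b\tau(\widehat{W})$, i.e.\ iff $v\in\tau(\widehat{W})^{\perp_b}=\tau(\widehat{W})$ (using that $\tau$ is a $b$-anti-isometry and that $\widehat{W}$ is Lagrangian), so the radical is $\widehat{W}\cap\tau(\widehat{W})$. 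Hence $\widehat{h}|_{\widehat{W}}$ is non-degenerate precisely when $\widehat{V}=\widehat{W}\oplus\tau(\widehat{W})$, an open and $\widehat{G}_0$-invariant condition that I will take to define the candidate open orbit $D$.

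Next I would pin down the signature. Because $\widehat{W}$ is $b$-isotropic one has $\widehat{h}(w,\tau w')=b(w,w')=0$, so the decomposition $\widehat{V}=\widehat{W}\oplus\tau(\widehat{W})$ is $\widehat{h}$-orthogonal; and since $\tau$ is also an $\widehat{h}$-anti-isometry, the two summands carry the same signature. Therefore the signature $(p,q+1)$ of $\widehat{V}$ splits evenly, forcing $\widehat{h}|_{\widehat{W}}$ to have signature $(\tfrac{p}{2},\tfrac{q+1}{2})$. This confines all non-degenerate planes to a single signature class, which yields uniqueness of the open $\widehat{G}_0$-orbit; transitivity on it is then obtained exactly as in Proposition \ref{Sp_2n(R)}, by assembling an element of $\widehat{G}_0$ from $\widehat{h}$-orthonormal adapted bases of $\widehat{W}$ together with their $\tau$-images. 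A direct computation on the normal form of Proposition \ref{normalform} shows $\widehat{W}_0$ has $\widehat{h}$-signature $(\tfrac{p}{2},\tfrac{q+1}{2})$, hence $\widehat{W}_0\in D$.

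Finally, I would close the loop on $G_0$. Proposition \ref{normalform} already carries every $\widehat{W}$ in the open orbit to $\widehat{W}_0$ by an element of $G_0$, so $G_0$ is transitive on the $G_0$-orbit of $\widehat{W}_0$, which by the previous paragraph lies in $D$; since $D$ is open and dense, any open $G_0$-orbit meets $D$, and using $G_0\subseteq\widehat{G}_0$ with the uniqueness of $D$ as a $\widehat{G}_0$-orbit, it must coincide with $D$. Thus $D$ is simultaneously the unique open orbit of $G_0$ and of $\widehat{G}_0$. The main obstacle is exactly this uniqueness/signature step: a priori $\widehat{G}_0=\mathrm{SO}(p,q+1)$ could have several open orbits on the isotropic Grassmannian indexed by the signature of $\widehat{h}|_{\widehat{W}}$, and what rescues us is the rigidity imposed by $\tau$ — the $b$-isotropy of $\widehat{W}$ makes $\widehat{W}$ and $\tau(\widehat{W})$ mutually $\widehat{h}$-orthogonal of equal signature, eliminating every candidate signature but $(\tfrac{p}{2},\tfrac{q+1}{2})$. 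Everything else is the bookkeeping of adapted bases already illustrated in $\S\ref{projective space}$.
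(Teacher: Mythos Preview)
Your argument is correct and, for the signature/uniqueness step, takes a genuinely different route from the paper. The paper's proof is very terse: it extends $h$ to $\widehat{h}$ exactly as you do, then disposes of the identification of the unique open $\widehat{G}_0$-orbit $D$ and its signature with the phrase ``arguing as above,'' meaning that the inductive normal-form procedure preceding Proposition \ref{normalform} (successively peeling off $h$-positive, then $h$-negative, $b$-isotropic lines) is to be repeated verbatim for $\widehat{G}_0$ acting on $\widehat{V}$; the $G_0$-transitivity on $D$ is then nothing but a restatement of Proposition \ref{normalform}. So in the paper the fixed signature $(\tfrac{p}{2},\tfrac{q+1}{2})$ emerges only implicitly, as the signature of the single normal form $\widehat{W}_0$ produced by that procedure.

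You instead import the device of \S\ref{projective space}: the antilinear involution $\tau(z)=\widehat{E}\,\overline{z}$ with $\widehat{h}(z,w)=b(z,\tau w)$ replaces the map $\varphi$ used there. The payoff is a clean structural explanation of why only one signature can occur: since an isotropic $n$-plane $\widehat{W}$ is $b$-Lagrangian, the radical of $\widehat{h}|_{\widehat{W}}$ is exactly $\widehat{W}\cap\tau(\widehat{W})$; non-degeneracy forces $\widehat{V}=\widehat{W}\oplus\tau(\widehat{W})$, this splitting is $\widehat{h}$-orthogonal, and $\tau$ preserves $\widehat{h}$-norms, so both summands carry the same signature and the ambient signature $(p,q+1)$ must split evenly. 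This is more conceptual than the paper's inductive basis construction and makes the parallel with the symplectic case in \S\ref{projective space} explicit. Both approaches ultimately invoke Proposition \ref{normalform} to obtain the $G_0$-transitivity, and your density argument for the uniqueness of the open $G_0$-orbit (the non-degeneracy locus is Zariski open in the irreducible variety $Z$, hence dense once nonempty) fills in a point the paper leaves tacit.
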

\begin {proof} 
It is enough to consider the case we where $p$ is even and $\Vert e_{2n-1}\Vert^2_h=-1$. Extending
$h$ to $\widehat{h}$ on $\widehat{V}$ with $e_{2n}$ being orthogonal to $V$ and 
$\Vert e_{2n}\Vert^2=-1$, it follows that $\widehat{h}$ is of signature $(p,q+1)$.  Let
$\widehat{G}_0=\mathrm{SO}(p,q+1)$ be the real form of $\widehat{G}=\mathrm{SO}_{2n}(\mathbb {C})$
defined by $\widehat{h}$. Arguing as above we see that the unique open $\widehat{G}_0$-orbit $D$
in $Z$ is the set of isotropic $n$-planes $\widehat{W}$ with signature 
$(\frac{p}{2},\frac{q+1}{2})$. A reformulation of Proposition \ref{normalform} is that
$G_0$ also acts transitively on $D$.
\end {proof}
The following is a less technical formulation of this fact.
\begin {cor}
If $Z$ is the complex flag manifold of isotropic $n$-planes in 
$\mathbb{C}^{2n}$ where both
$\widehat{G}$ and $G$ act transitively, every real form $G_0$ of $G$ has
a unique open orbit $D$ which is the unique open orbit of a canonically
determined real form $\widehat{G}_0=\mathrm{Aut}(D)^0$ of $\widehat{G}=\mathrm{Aut}(Z)^0$.
\end {cor}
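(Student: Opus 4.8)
The plan is to read this Corollary as a clean repackaging of the preceding Theorem, so the first task is to match the classification of real forms against the two parity cases that Theorem was designed to cover. Up to conjugation, every real form of $G=\mathrm{SO}_{2n-1}(\mathbb{C})$ is the isometry group $G_0=\mathrm{SO}(p,q)$ of a Hermitian form of signature $(p,q)$ with $p+q=2n-1$. Since $2n-1$ is odd, exactly one of $p,q$ is even, so precisely one of the two alternatives of the preceding Theorem applies; it produces the canonical extension $\widehat h$ of $h$ to $\widehat V=\mathbb{C}^{2n}$, the corresponding real form $\widehat G_0$ (equal to $\mathrm{SO}(p,q+1)$ or $\mathrm{SO}(p+1,q)$) of $\widehat G=\mathrm{SO}_{2n}(\mathbb{C})$, and the assertion that $G_0$ and $\widehat G_0$ share a single open orbit $D$ on which each acts transitively. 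This already delivers the existence and uniqueness of $D$; what remains is the identification $\widehat G_0=\mathrm{Aut}(D)^0$ together with the canonicity of $\widehat G_0$.

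For that identification I would argue as follows. Transitivity of $\widehat G_0$ on $D$ by biholomorphisms gives $\widehat{G}_0^0\subseteq\mathrm{Aut}(D)^0$. For the reverse inclusion I would invoke the integrability Corollary proved above: once $D$ is known to be cycle connected, $\mathrm{Aut}(D)^0$ is a finite-dimensional connected group all of whose holomorphic vector fields extend to $Z$, so that it is realized as a connected real subgroup of $\widehat G=\mathrm{Aut}(Z)^0$ stabilizing $D$. As $D$ is a proper open subset of $Z$ while $\widehat G$ is transitive on $Z$, this subgroup is proper. Now $\mathrm{Lie}(\widehat G)$, regarded as a real Lie algebra, is simple, and the real form $\widehat{\mathfrak g}_0=\mathrm{Lie}(\widehat G_0)$ is a maximal proper real subalgebra of it; since $\widehat{\mathfrak g}_0\subseteq\mathrm{Lie}(\mathrm{Aut}(D)^0)\subsetneq\mathrm{Lie}(\widehat G)$, maximality forces $\mathrm{Aut}(D)^0=\widehat{G}_0^0$. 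The same maximality makes $\widehat G_0$ the unique real form of $\widehat G$ stabilizing $D$ and containing $G_0$, which is exactly the canonicity asserted here and the content of Theorem \ref{exceptional} in this case.

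The step I expect to be the real obstacle is the hypothesis used above, namely that $D$ is cycle connected, equivalently (Proposition \ref{connected}) not holomorphically convex. Here I would exploit the fact that the base of any holomorphic reduction of $D$ is, by the holomorphic-reduction Theorem of \S\ref{intro}, a Hermitian symmetric space of the simple group $G_0$ itself. When $G_0=\mathrm{SO}(p,q)$ is not of Hermitian type --- that is, whenever neither $p$ nor $q$ equals $2$ --- no such nontrivial base exists, the holomorphic reduction is trivial, and $D$ is automatically cycle connected. The only configurations demanding separate attention are therefore $p=2$ or $q=2$, where $G_0$ does carry a bounded symmetric domain of type IV; in these I would verify that the unique open orbit $D$ inside the space of isotropic $n$-planes is not that symmetric domain, for instance by observing that $Z$ admits no $G_0$-equivariant fibration onto the quadric carrying the type IV domain, so that the holomorphic reduction is again trivial. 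Discharging these low-signature cases cleanly, as opposed to the essentially formal bookkeeping of the first two paragraphs, is where the genuine work of the Corollary lies.
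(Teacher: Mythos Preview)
Your proposal is correct and, in fact, considerably more detailed than what the paper provides: the paper offers no proof at all, introducing the Corollary merely as ``a less technical formulation'' of the preceding Theorem. Your first paragraph is exactly that repackaging, and your second paragraph---reducing $\mathrm{Aut}(D)^0=\widehat{G}_0$ to the integrability Corollary plus maximality of a real form inside the underlying real simple Lie algebra---is precisely the mechanism behind Theorem~\ref{exceptional}, which the paper leaves implicit here.

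The one place where you do more than the paper is your third paragraph on cycle connectedness. This concern is legitimate if the Corollary is read in isolation, but in the paper's architecture it is moot: all of Section~5 is the case-by-case proof of Theorem~\ref{exceptional}, whose standing hypothesis is that $D$ is \emph{not} holomorphically convex. The Corollary is meant to be read under that hypothesis, so the low-signature Hermitian cases you flag do not need separate treatment. Your instinct that something is being swept under the rug is sound---the Corollary's phrasing ``every real form $G_0$'' is indeed looser than the logic strictly supports---but the paper resolves this by context rather than by argument.
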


\section*{Acknowledgements} The research for this project was 
supported by SFB/TR 12 and SPP 1388 of the Deutsche Forschungsgemeinschaft.

\begin{thebibliography} {10}
\bibitem  [A] {A} 
A. Andreotti:
Th\'eor\`emes de d\'ependance alg\'ebrique sur les espaces
complexes pseudo-concaves, Bull. Soc. Math. France {\bf 91}
(1963) 1-38
\bibitem [B] {B} 
J.  Brun:
Sur la simplification par les vari\'et\'es homog\`enes,  Math. Ann. 230, (1977) 175-182 
\bibitem [FHW] {FHW}
G. Fels, A. Huckleberry, and J. A. Wolf: Cycles Spaces of
Flag Domains: A Complex Geometric Viewpoint, 
Progress in Mathematics, Volume 245, Springer/Birkh\"auser Boston, 
2005
\bibitem [GRT] {GRT}
P.Griffiths, C. Robles and D. Toledo:
Quotients of non-classical flag domains are not algebraic
(arXiv1303.0252)
\bibitem [Ha] {Ha}
R. Harvey:
Spinors and Calibrations, Academic Press (1990)
\bibitem [H1] {H1} 
A. Huckleberry: Hyperbolicity of cycle spaces and automorphism
groups of flag domains, American Journal of Mathematics,
Vol. 136, Nr. 2 (2013) 291-310 (arXiv:1003:5974) 
\bibitem [H2] {H2}
A. Huckleberry: Remarks on homogeneous manifolds satisfying
Levi-conditions, Bollettino U.M.I. (9) {\bf III} (2010) 1-23
(arXiv:1003:5971)
\bibitem [K] {K}
J. Kollar: Neighborhoods of subvarieties in homogeneous spaces
(arXiv1308.5603)
\bibitem [O1] {O1}
A. Onishchik:
Transitive compact transformation groups, Math. Sb. (N.S.) 60 (1963) 447-485
English Trans: AMS Trans. (2) 55 (1966) 5-58
\bibitem [O2] {O2}
Arkadii L'vovich Onishchik (on his 70th birthday), Russian Math. Surveys 58.6 1245-1253
\bibitem [W] {W}
J. A. Wolf:
The action of a real semisimple Lie group on a complex
manifold, {\rm I}: Orbit structure and holomorphic arc components,
Bull. Amer. Math. Soc. {\bf 75} (1969), 1121--1237. 
\end {thebibliography}
\end {document}